\numberwithin{equation}{section}
\newtheorem{theorem}{Theorem}[section]
\newtheorem{prop}[theorem]{Proposition}
\newtheorem{lemma}[theorem]{Lemma}
\theoremstyle{definition}
\newcommand{\Mm}{\mathcal{M}}
\newcommand{\Rn}{\mathbb{R}^n}
\newcommand{\nw}{\nu_{\vec{w}}}
\newcommand{\f}{\frac}
\newcommand{\vc}{\infty}
\def\Xint#1{\mathchoice
{\XXint\displaystyle\textstyle{#1}}%
{\XXint\textstyle\scriptstyle{#1}}%
{\XXint\scriptstyle\scriptscriptstyle{#1}}%
{\XXint\scriptscriptstyle\scriptscriptstyle{#1}}%
\!\int}
\def\XXint#1#2#3{{\setbox0=\hbox{$#1{#2#3}{\int}$}
\vcenter{\hbox{$#2#3$}}\kern-.5\wd0}}
\def\dashint{\Xint-}
\let \a=\alpha
\let \la=\lambda
\let \o=\omega
\begin{document}

\title[Weighted bounds for multilinear square functions]
  {Weighted bounds for multilinear square functions}

\authors

\author[T. A. Bui]{The Anh Bui}
\address{The Anh Bui \\
Department of Mathematics, Macquarie University, Ryde 2109 NSW, Australia}
\email{the.bui@mq.edu.au}

\author[M. Hormozi]{Mahdi Hormozi}
\address{Mahdi Hormozi \\
Department of Mathematical Sciences, Division of Mathematics,
University of Gothenburg, Gothenburg 41296, Sweden}
\email{hormozi@chalmers.se}

\subjclass[2010]{Primary: 42B20, 42B25.}
\keywords{Multilinear singular integrals, weighted norm inequalities, {aperture dependence}}

\arraycolsep=1pt

\begin{abstract} Let $\vec{P}=(p_1,\dotsc,p_m)$ with $1<p_1,\dotsc,p_m<\infty$, $1/p_1+\dotsb+1/p_m=1/p$ and $\vec{w}=(w_1,\dotsc,w_m)\in A_{\vec{P}}$. In this paper, we investigate the weighted bounds with dependence on aperture $\alpha$ for multilinear square functions  $S_{\alpha,\psi}(\vec{f})$. We show that
$$
\|S_{\alpha,\psi}(\vec{f})\|_{L^p(\nu_{\vec{w}})} \leq C_{n,m,\psi,\vec{P}}~ \alpha^{mn}[\vec{w}]_{A_{\vec{P}}}^{\max(\frac{1}{2},\tfrac{p_1'}{p},\dotsc,\tfrac{p_m'}{p})} \prod_{i=1}^m \|f_i\|_{L^{p_i}(w_i)}.
  $$
This result extends  the result in the linear case which was obtained by Lerner in 2014. Our proof is based on the local mean oscillation technique presented firstly to find the weighted bounds for Calder\'on--Zygmund operators. This method helps us avoiding intrinsic square functions in the proof of our main result.
\end{abstract}

\maketitle
\section{Introduction}
\par The problem of the optimal quantitative estimates for the $L^p(w)$ norm of a given operator $T$ in terms of the $A_p$ constant of the weight $w$ has been very challenging and interesting in the last decades.
\par First, the problem for the Hardy--Littlewood maximal operator was solved by S. Buckley \cite{Bu} who proved

\begin{equation}\label{Buckley_thm}
  \|M\|_{L^p(w)} \leq C_{p} \,[w]_{A_p}^{\frac{1}{p-1}},
\end{equation}
where $C_p$ is a dimensional constant. We say that \eqref{Buckley_thm} is a sharp estimate since the exponent $1/(p-1)$ cannot be replaced by a smaller one.
\par However, for singular integral operators the question was much more complicated. In 2012, T. Hyt\"onen \cite{Hyt1} proved the so-called $A_2$ theorem, which asserted that the sharp dependence of the $L^2(w)$ norm of a Calder\'on--Zygmund operator on the $A_2$ constant of the weight $w$ was linear. More precisely,
\begin{equation}
\label{eq:Ap_sharp_CZO}
  \|T\|_{L^p(w)} \leq C_{T,n,p} [w]_{A_p}^{\max{\left(1,\frac{1}{p-1}\right)}}, \,\,  1<p<\infty.
\end{equation}

Shortly after that, A.K. Lerner gave a much simpler proof \cite{Ler4} of the $A_2$ theorem proving that every Calder\'on--Zygmund operator is bounded from above by a supremum of sparse operators. Namely, if $X$ is a Banach function space, then
\begin{equation}
\label{eq:domination_CZO}
  \|T(f)\|_X \leq C \sup_{\mathscr{D},\mathcal{S}} \| \mathcal{A}_{\mathscr{D},\mathcal{S}}(f)\|_X,
\end{equation}
where the supremum is taken over arbitrary dyadic grids $\mathscr{D}$ and sparse families $\mathcal{S}\in\mathscr{D}$, and
\begin{equation*}
  \mathcal{A}_{\mathscr{D},\mathcal{S}}(f)= \sum_{Q\in \mathcal{S}}\Big(\dashint_Q f\Big)\chi_Q.
\end{equation*}
The interested readers can consult \cite{Hyt2} for a survey on the history of the proof.
\par The versatility of Lerner's techniques is reflected in the extension of \eqref{eq:domination_CZO} and the $A_2$ theorem to multilinear Calder\'on--Zygmund operators in \cite{DLP}. Later on, Li, Moen and Sun in \cite{LMS} proved the corresponding sharp weighted $A_{\vec P}$ bounds for multilinear sparse operators. In other words, if $1<p_1,\dotsc,p_m<\infty$ with $\tfrac{1}{p_1}+\dotsb+\tfrac{1}{p_m}=\tfrac{1}{p}$ and $\vec{w}\in A_{\vec{P}}$, then
\begin{equation}
\label{eq:LMS_sparse}
          \|\mathcal{A}_{\mathscr{D},S}(\vec{f})\|_{L^p(\nu_{\vec{w}})} \lesssim [\vec{w}]_{A_{\vec{P}}}^{\max(1,\tfrac{p'_1}{p},\dotsc,\tfrac{p'_m}{p})} \prod_{i=1}^m \|f_i\|_{L^{p_i}(w_i)},
\end{equation}
where $\mathcal{A}_{\mathscr{D},S}$ denotes the multilinear sparse operators
\begin{equation*}
  \mathcal{A}_{\mathscr{D},S}(\vec{f})(x)=\sum_{Q} \left(\prod_{i=1}^m (f_i)_{Q}\right) \chi_{Q}(x),
\end{equation*}
and the other notation is explained in Section 2.
The readers are referred to \cite{CR, LMS} to observe that from \eqref{eq:LMS_sparse}, we can derive the multilinear $A_{\vec P}$ theorem for $1/m<p<\infty$. More precisely, if $T$ is a multilinear Calder\'on--Zygmund operator, $1<p_1,\dotsc,p_m<\infty$, $\tfrac{1}{p_1}+\dotsb+\tfrac{1}{p_m}=\tfrac{1}{p}$ and $\vec{w}=(w_1,\dotsc,w_m)\in A_{\vec{P}}$, then
  \begin{equation}\label{eq:LMS_CZO}
    \|T(\vec{f})\|_{L^p(\nu_{\vec{w}})} \leq C_{n,m,\vec{P},T} [\vec{w}]_{A_{\vec{P}}}^{\max(1,\tfrac{p_1'}{p},\dotsc,\tfrac{p_m'}{p})} \prod_{i=1}^m \|f_i\|_{L^{p_i}(w_i)}.
  \end{equation}
For further details on the theory of multilinear Calder\'on--Zygmund operators, we refer to \cite{G, GT1} and the references therein.

Let $S_{\alpha,\phi}$ be the square function defined by means of the cone $\Gamma_\alpha$ in $\mathbb{R}^{n+1}_{+}$ of aperture $\alpha>1$, and a standard kernel $\phi$ as follows

$$
S_{\alpha, \phi}(f)(x)=\Big(\int_{\Gamma_\alpha(x)}|f\star \phi_t(y)|^2\f{dydt}{t^{n+1}}\Big)^{1/2},
$$
 where $\phi_t(x)=t^{-n} \phi(x/t)$ and $\star$ refers to convolution operation of two functions. In \cite{Ler6}, Lerner by applying \textit{intrinsic square functions}, introduced in \cite{Wil}, proved sharp weighted norm inequalities for $S_{\alpha, \phi}(f)$ . Later on, Lerner himself improved the result--- in the sense of determination of sharp dependence on $\alpha$  --- in \cite{Ler5} by using the local mean oscillation formula. More precisely,
%
\begin{equation}
\label{eq:sharp_lsq}
  \|S_{\alpha,\phi}\|_{L^p(w)} \lesssim \alpha^n [w]_{A_p}^{\max{\left(\frac{1}{2},\frac{1}{p-1}\right)}}, \,\,  1<p<\infty.
\end{equation}

Motivated by these works, the main aim of this paper is to investigate the weighted bounds for certain multilinear square functions. Let us recall the definition of multilinear square functions considered in this paper.

For any $t\in (0,\infty)$, let $\psi(x,\vec{y}):=K_t(x, y_1, \dotsc, y_m)$ be a locally integrable function defined away from the diagonal $x =y_1=\dotsc =y_m$ in $\mathbb{R}^{n\times (m+1)}$. We assume that there are positive constants $\delta$ and $ A$ so that the following conditions hold.

\begin{enumerate}[Size condition:]
 \item \begin{equation}\label{eq-sizecondition}
 |\psi(x,\vec{y})|\leq \f{A}{(1+|x-y_1|+\dotsb+|x-y_m|)^{mn+\delta}}.
  \end{equation}
  \end{enumerate}

\begin{enumerate}[Smoothness condition:]
 \item There exists $\gamma>0$ so that
 \begin{equation}\label{eq-smoothcondition1}
 |\psi(x,\vec{y})-\psi(x+h,\vec{y})|\leq \f{A|h|^{\gamma}}{(1+|x-y_1|+\dotsb+|x-y_m|)^{mn+\delta+\gamma}},
  \end{equation}
 whenever $|h|<\f{1}{2}\max_j|x-y_j|$,  and
\begin{equation}\label{eq-smoothcondition2}
 |\psi(x,y_1,\dotsc, y_i,\dotsc, y_m)-\psi(x,y_1,\dotsc, y_i+h,\dotsc, y_m)|\leq \f{A|h|^{\gamma}}{(1+|x-y_1|+\dotsb+|x-y_m|)^{mn+\delta+\gamma}},
  \end{equation}
  whenever $|h|<\f{1}{2}|x-y_i|$ for $i\in \{1,\dotsc,m\}$.
\end{enumerate}
For $\vec{f}=(f_1,\dotsc,f_m)\in \mathcal{S}(\mathbb{R}^n)\times\dotsb\times
\mathcal{S}(\mathbb{R}^n)$ and $x\notin \bigcap_{j=1}^m {\rm supp}\, f_j$ we define
$$
\psi_t(\vec{f})(x)=\f{1}{t^{mn}}\int_{(\mathbb{R}^n)^m}\psi\Big(\f{x}{t},\f{y_1}{t},\dotsc, \f{y_m}{t}\Big)\prod_{j=1}^m f_j(y_j)dy_j.
$$
For $\lambda>2m, \alpha>0$, the multilinear square functions $g^*_{\lambda, \psi}$ and $S_{\psi,\alpha}$ associated to $\psi(x,\vec{y})$ are defined by
$$
\begin{aligned}
g^*_{\lambda, \psi}(\vec{f})(x)&=\Big(\int_{\mathbb{R}^{n+1}_+}\Big(\f{t}{t+|x-y|}\Big)^{n\lambda}|\psi_t(\vec{f})(y)|^2\f{dydt}{t^{n+1}}\Big)^{1/2},
\end{aligned}
$$
and
$$
\begin{aligned}
S_{\alpha, \psi}(\vec{f})(x)&=\Big(\int_{\Gamma_\alpha(x)}|\psi_t(\vec{f})(y)|^2\f{dydt}{t^{n+1}}\Big)^{1/2},
\end{aligned}
$$
where $\Gamma_\alpha(x)=\{(y,t)\in \mathbb{R}^{n+1}_+: |x-y|<\alpha t\}$.

These two mutilinear square functions were introduced and investigated in \cite{CXY, SXY, XY}. The study on the multilinear square functions has important applications in PDEs and other fields. For further details on the theory of multilinear square functions and their applications, we refer to \cite{CDM, CMM, CM, DJ, FJK1, FJK2, FJK3, H, CHO, XY, CXY, H} and the references therein.

\medskip

In this paper, we assume that there exist some
$1\leq p_1,\dotsc,p_m \leq \infty$
and some $0<p<\infty$
with $\frac{1}{p}=\frac{1}{p_1}+\dotsb+\frac{1}{p_m}$,
such that $g^*_{\lambda, \psi}$ maps continuously
$L^{p_1}(\Rn)\times\dotsb\times L^{p_m}(\Rn)\to L^{p}(\Rn)$. Under this condition, it was proved in \cite{XY} (see also \cite{SXY}) that $g^*_{\lambda, \psi}$ maps continuously $L^{1}(\Rn)\times\dotsb\times L^{1}(\Rn)\to L^{1/m,\vc}(\Rn)$ provided $\lambda>2m$. Moreover, since $S_{\alpha,\psi}$ is dominated by $g^*_{\lambda, \psi}$, we also get that $S_{\alpha, \psi}$ maps continuously $L^{1}(\Rn)\times\dotsb\times L^{1}(\Rn)\to L^{1/m,\vc}(\Rn)$. {The next theorem gives the  weighted bounds depending on $\alpha$ for multilinear square functions $S_{\alpha,\psi}(\vec{f})$.}
\begin{theorem}
  \label{thm:sharp}
     Let $\vec{P}=(p_1,\dotsc,p_m)$ with $1<p_1,\dotsc,p_m<\infty$ and $1/p_1+\dotsb+1/p_m=1/p$. Let $\alpha \geq 1$. If $\vec{w}=(w_1,\dotsc,w_m)\in A_{\vec{P}}$, then
  \begin{equation}
  \label{eq:sharpGCZO}
    \|S_{\alpha,\psi}(\vec{f})\|_{L^p(\nu_{\vec{w}})} \leq C_{n,m,\psi,\vec{P}} \alpha^{mn}[\vec{w}]_{A_{\vec{P}}}^{\max(\frac{1}{2},\tfrac{p_1'}{p},\dotsc,\tfrac{p_m'}{p})} \prod_{i=1}^m \|f_i\|_{L^{p_i}(w_i)}.
  \end{equation}
\end{theorem}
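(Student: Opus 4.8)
The plan is to follow the local mean oscillation / sparse domination strategy that Lerner used for the linear square function \eqref{eq:sharp_lsq}, adapting it to the multilinear setting with the explicit aperture dependence. The essential point is that we must \emph{not} go through intrinsic square functions (which would cost us the correct power of $\alpha$); instead we dominate $S_{\alpha,\psi}(\vec f)$ pointwise by a multilinear sparse operator with a controlled constant and then invoke \eqref{eq:LMS_sparse}.

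First I would record the elementary geometric fact that $S_{\alpha,\psi}(\vec f)(x) \le \alpha^{\,?}\, S_{1,\psi}(\vec f)(x)$ is \emph{false} in general, so the aperture must be tracked through the argument rather than removed at the outset. The correct route is to run the local mean oscillation estimate directly for $S_{\alpha,\psi}$. Concretely, for a cube $Q$ I would split $\vec f = (f_1\chi_{3Q}+f_1\chi_{(3Q)^c},\dots)$ into the $2^m$ pieces, and estimate the local oscillation $\omega_\lambda\big(S_{\alpha,\psi}(\vec f);Q\big)$. The all-local term $\prod f_i\chi_{3Q}$ is handled by the weak $(1,\dots,1)\to(1/m,\infty)$ bound recalled in the excerpt (which holds because $S_{\alpha,\psi}\le g^*_{\lambda,\psi}$), giving a contribution $\lesssim \prod_i (f_i)_{3Q}$ with a constant that is \emph{independent} of $\alpha$ — here one uses Kolmogorov's inequality together with the weak-type bound, and the weak-type constant for $S_{\alpha,\psi}$ depends on $\alpha$ only polynomially; in fact the $g^*_\lambda$-domination $S_{\alpha,\psi}\le C\,\max(1,\alpha)^{n\lambda/2}\,g^*_{\lambda,\psi}$ is too lossy, so instead I would prove the weak-type bound for $S_{\alpha,\psi}$ directly with constant $O(\alpha^{mn})$ by a Calderón–Zygmund decomposition, exactly as in the linear case. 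For each partially-global term, where at least one $f_i$ is replaced by $f_i\chi_{(3Q)^c}$, I would use the size and smoothness conditions \eqref{eq-sizecondition}–\eqref{eq-smoothcondition2} to show that the relevant difference $S_{\alpha,\psi}(\dots)(x)-S_{\alpha,\psi}(\dots)(x')$ for $x,x'\in Q$ is bounded by $C\,\alpha^{mn}\prod_{i}(|f_i|)_{2^{k}Q}$ summed over $k\ge1$ with geometric decay $2^{-k\delta}$ (the $\alpha^{mn}$ enters because the cone $\Gamma_\alpha(y)$ over a point $y\in Q$ reaches horizontal distance $\alpha t$, so the truncation radius that makes the kernel tail summable scales like $\alpha\,\ell(Q)$, and the volume factor from integrating $dy\,dt/t^{n+1}$ over the enlarged cone produces the $\alpha^{mn}$; the Minkowski integral inequality lets us bring the $L^2(dydt/t^{n+1})$ norm inside). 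Combining, $\omega_\lambda(S_{\alpha,\psi}(\vec f);Q)\lesssim \alpha^{mn}\sum_{k\ge0}2^{-k\delta}\prod_i(|f_i|)_{2^kQ}$, and the median of $S_{\alpha,\psi}(\vec f)$ over $Q$ satisfies the analogous bound.

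Next I would feed these two estimates into Lerner's local mean oscillation formula (in its multilinear form, as used in \cite{DLP, LMS}): this yields a pointwise bound
\[
  S_{\alpha,\psi}(\vec f)(x) \le C\,\alpha^{mn}\sum_{j}\sum_{Q\in\mathcal S_j}\Big(\prod_{i=1}^m (|f_i|)_{3Q}\Big)\chi_Q(x)
\]
for finitely many sparse families $\mathcal S_j\subset\mathscr D_j$, after absorbing the dilation $3Q\rightsquigarrow Q$ into sparseness in the standard way. Then \eqref{eq:LMS_sparse} applied to each $\mathcal A_{\mathscr D_j,\mathcal S_j}$ gives
\[
  \|S_{\alpha,\psi}(\vec f)\|_{L^p(\nu_{\vec w})}
  \le C_{n,m,\psi,\vec P}\,\alpha^{mn}\,[\vec w]_{A_{\vec P}}^{\max(1,\,p_1'/p,\dots,p_m'/p)}\prod_{i=1}^m\|f_i\|_{L^{p_i}(w_i)},
\]
which is almost the claim but with exponent $\max(1,p_i'/p)$ rather than $\max(1/2,p_i'/p)$. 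The final refinement — replacing $1$ by $1/2$ in the exponent — is where square functions beat Calderón–Zygmund operators, and I would obtain it exactly as in \eqref{eq:sharp_lsq}: interpolate the sparse bound \eqref{eq:LMS_sparse} (which controls the square of $S_{\alpha,\psi}$, i.e.\ the ``dual'' formulation $\|(S_{\alpha,\psi}\vec f)^2\|_{L^{p/2}}$ by a sparse operator acting on $\prod|f_i|^2$-type data) with the fact that, for square functions, one can run the sparse domination at the level of $|S_{\alpha,\psi}(\vec f)|^2$ and then take square roots, effectively halving the relevant $A_p$-type exponent in the ``$1$'' regime. Concretely, I would use the known sharp weighted bound for the \emph{sparse square operator} $\big(\sum_{Q\in\mathcal S}(\prod_i(f_i)_Q)^2\chi_Q\big)^{1/2}$, which carries exponent $\max(1/2,p_1'/p,\dots,p_m'/p)$, and dominate $S_{\alpha,\psi}(\vec f)$ pointwise by $C\alpha^{mn}$ times such an object (the oscillation estimate above, being an $\ell^2$ estimate in $t$, in fact produces the $\ell^2$-sparse form directly once the geometric $k$-sum is arranged with care).

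The main obstacle I anticipate is the partially-global oscillation estimate with the \emph{sharp} aperture power $\alpha^{mn}$: one must integrate $|\psi_t(\vec f\,\text{(mixed)})(y)|^2$ over $\Gamma_\alpha(y)$ for $y\in Q$ and show the bound grows no faster than $\alpha^{2mn}$ inside the square (hence $\alpha^{mn}$ after the root), uniformly in which subset of indices is ``global'' and with the geometric decay in $k$ preserved; this requires carefully choosing the splitting radius ($\sim \alpha\ell(Q)$ versus $\sim\ell(Q)$) and using the smoothness conditions \eqref{eq-smoothcondition1}–\eqref{eq-smoothcondition2} on the $x$-variable to get the difference $\psi_t(\cdots)(y)-\psi_t(\cdots)(y')$ small when $(y,t)$ ranges over the wide cone. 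A secondary technical point is verifying that the median/infimum terms in Lerner's formula do not reintroduce a worse $\alpha$-dependence — but since those are controlled by the same weak-type constant $O(\alpha^{mn})$ established at the start, this is routine.
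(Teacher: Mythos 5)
Your skeleton is the same as the paper's (local mean oscillation formula, weak-type endpoint carrying the factor $\alpha^{mn}$, sparse domination, and a weighted bound for an $\ell^2$-type multilinear sparse operator with exponent $\max(\frac12,\frac{p_1'}{p},\dots,\frac{p_m'}{p})$), but the three steps that actually carry the theorem are either left unresolved or misdirected. First, the ``known sharp weighted bound for the sparse square operator'' that you invoke is not available in the multilinear literature you would be quoting: \eqref{eq:LMS_sparse} (Li--Moen--Sun) is the case $\gamma=1$ only, and the $\gamma=2$ bound with exponent $\max(\frac12,\frac{p_i'}{p})$ is exactly what the paper has to prove (Theorem \ref{Thm:LMS_sparse_Ap_thm}, established for all $\gamma\ge1$ by a direct argument avoiding duality). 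Without that ingredient your argument only yields the exponent $\max(1,\frac{p_i'}{p})$, and the subsequent ``interpolate/halve the exponent'' remark is not a proof; the clean mechanism, which you mention only in passing, is to apply Hyt\"onen's formula (Theorem \ref{decom1}) to the \emph{square} $S_{\alpha,\psi}(\vec f)^2$ from the start, so that the oscillation estimate produces squares of products of averages, i.e.\ $[\mathcal{A}^2_{\mathscr{D},\mathcal{S}}]^2$, and the exponent $\frac12$ comes out of Theorem \ref{Thm:LMS_sparse_Ap_thm} with $\gamma=2$ rather than out of any interpolation.

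Second, your plan for the partially-global oscillation terms misidentifies where the $x$-dependence lives: in $S_{\alpha,\psi}(\vec f)(x)$ the integrand $|\psi_t(\vec f)(y)|^2$ does not depend on $x$ at all; only the cone $\Gamma_\alpha(x)$ does. Hence the kernel smoothness conditions \eqref{eq-smoothcondition1}--\eqref{eq-smoothcondition2} do not directly make $S_{\alpha,\psi}(\cdots)(x)-S_{\alpha,\psi}(\cdots)(x_Q)$ small, and with the sharp indicator of the cone there is no smallness to exploit (this is precisely the ``main obstacle'' you flag and do not resolve). The paper's device is to replace the indicator by a Schwartz cutoff $\Phi$, defining $\widetilde S_{\alpha,\psi}$ with $S_{\alpha,\psi}\le\widetilde S_{\alpha,\psi}\le S_{2\alpha,\psi}$ as in \eqref{cutoffsquarefunction}; then the difference at $x,x_Q\in Q$ costs only $|\Phi(\frac{x-y}{\alpha t})-\Phi(\frac{x_Q-y}{\alpha t})|\lesssim\ell(Q)/(\alpha t)$, and the whole oscillation estimate (Proposition \ref{prop:osc}) uses only the size condition \eqref{eq-sizecondition}, with the sharp power $\alpha^{2mn}$ tracked explicitly. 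Third, for the local part you propose to reprove the weak $L^1\times\cdots\times L^1\to L^{1/m,\infty}$ bound for $S_{\alpha,\psi}$ with constant $O(\alpha^{mn})$ by a Calder\'on--Zygmund decomposition ``as in the linear case''; this is plausible but not carried out, and the paper instead proves a general change-of-aperture estimate in weak $L^p$ for $0<p<2$ (Lemma \ref{weakestimate}, constant $\alpha^{n/p}$, applied at $p=1/m$), transferring the known endpoint bound for $S_{1,\psi}$. In short, the route is right in outline, but the sparse-square weighted bound, the passage through $\widetilde S_{\alpha,\psi}^{\,2}$, and the aperture-tracked oscillation/weak-type estimates are genuine missing pieces rather than routine details.
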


For the weighted bounds for $g^*_{\lambda,\psi}$ functions, we have the following result.
\begin{theorem}
  \label{thm:sharp2}
     Let $\lambda> 2m$, $\vec{P}=(p_1,\dotsc,p_m)$ with $1<p_1,\dotsc,p_m<\infty$ and $1/p_1+\dotsb+1/p_m=1/p$. If $\vec{w}=(w_1,\dotsc,w_m)\in A_{\vec{P}}$, then
  \begin{equation}
  \label{eq:sharp2}
    \|g^*_{\lambda, \psi}(\vec{f})\|_{L^p(\nu_{\vec{w}})} \leq C_{n,m,\psi,\vec{P}}[\vec{w}]_{A_{\vec{P}}}^{\max(\frac{1}{2},\tfrac{p_1'}{p},\dotsc,\tfrac{p_m'}{p})} \prod_{i=1}^m \|f_i\|_{L^{p_i}(w_i)}.
  \end{equation}
\end{theorem}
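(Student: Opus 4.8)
The plan is to deduce Theorem \ref{thm:sharp2} from Theorem \ref{thm:sharp} by controlling $g^*_{\lambda,\psi}$ pointwise by the square functions $S_{2^{k+1},\psi}$, $k\ge 0$, and then summing the resulting series, which converges precisely because $\lambda>2m$.

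First I would prove the pointwise estimate $g^*_{\lambda,\psi}(\vec{f})(x)^2 \le C_{n,\lambda}\sum_{k\ge 0}2^{-kn\lambda}S_{2^{k+1},\psi}(\vec{f})(x)^2$. To this end, split $\mathbb{R}^{n+1}_+$ into $\Gamma_1(x)$ and the annuli $\Gamma_{2^{k+1}}(x)\setminus\Gamma_{2^k}(x)$, $k\ge 0$. On the $k$-th annulus $|x-y|\ge 2^k t$, so $\big(\tfrac{t}{t+|x-y|}\big)^{n\lambda}\le 2^{-kn\lambda}$; enlarging the region of integration from the annulus to the whole cone $\Gamma_{2^{k+1}}(x)$ (which only increases the nonnegative integrand) bounds its contribution by $2^{-kn\lambda}S_{2^{k+1},\psi}(\vec{f})(x)^2$, while on $\Gamma_1(x)$ the weight $\big(\tfrac{t}{t+|x-y|}\big)^{n\lambda}$ is $\le 1$, contributing at most $S_{1,\psi}(\vec{f})(x)^2\le S_{2,\psi}(\vec{f})(x)^2$. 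Summing gives the claim.

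Next I would take $L^p(\nu_{\vec{w}})$ norms of both sides, distinguishing whether $p\ge 2$ or $0<p<2$ (the latter case does occur, since only $p>1/m$ is forced). When $p\ge 2$, $\|\cdot\|_{L^{p/2}(\nu_{\vec{w}})}$ is a genuine norm, so Minkowski's inequality applied to the functions $2^{-kn\lambda}S_{2^{k+1},\psi}(\vec{f})^2$ yields $\|g^*_{\lambda,\psi}(\vec{f})\|_{L^p(\nu_{\vec{w}})}^2\le C_{n,\lambda}\sum_{k\ge 0}2^{-kn\lambda}\|S_{2^{k+1},\psi}(\vec{f})\|_{L^p(\nu_{\vec{w}})}^2$. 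When $0<p<2$, I would instead use the elementary inequality $(\sum_k c_k)^{p/2}\le\sum_k c_k^{p/2}$ for $c_k\ge 0$, integrate, and obtain the analogous bound with every exponent $2$ replaced by $p$. In both cases Theorem \ref{thm:sharp} gives $\|S_{2^{k+1},\psi}(\vec{f})\|_{L^p(\nu_{\vec{w}})}\le C_{n,m,\psi,\vec{P}}(2^{k+1})^{mn}[\vec{w}]_{A_{\vec{P}}}^{\max(\frac12,\frac{p_1'}{p},\dotsc,\frac{p_m'}{p})}\prod_{i=1}^m\|f_i\|_{L^{p_i}(w_i)}$ for every $k$, with the \emph{same} exponent on $[\vec{w}]_{A_{\vec{P}}}$. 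It then remains to sum $\sum_{k\ge 0}2^{-kn\lambda}(2^{k+1})^{2mn}$ (respectively $\sum_{k\ge 0}2^{-kn\lambda p/2}(2^{k+1})^{mnp}$), a geometric series with ratio $2^{n(2m-\lambda)}<1$; its sum is a finite constant depending on $n$, $m$, $\lambda$, which is absorbed into $C_{n,m,\psi,\vec{P}}$, and yields the stated inequality.

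I do not expect a genuine obstacle here: the substance of the theorem is already carried by Theorem \ref{thm:sharp}, and the only point requiring attention is the $p<2$ case, where the $L^{p/2}$ quasi-norm forces the concavity-type inequality above in place of Minkowski --- but this costs nothing, since the exponent of $[\vec{w}]_{A_{\vec{P}}}$ is independent of the aperture. The hypothesis $\lambda>2m$ is exactly the threshold making the aperture series summable.
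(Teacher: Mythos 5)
Your argument is correct, and it shares with the paper the same first step (the aperture decomposition $g^*_{\lambda,\psi}(\vec f)^2\lesssim\sum_{k\ge 0}2^{-kn\lambda}S_{2^{k+1},\psi}(\vec f)^2$, convergent exactly when $\lambda>2m$), but it diverges afterwards. You sum the series at the level of weighted norms, invoking Theorem \ref{thm:sharp} as a black box for each aperture $2^{k+1}$ and then handling the summation by Minkowski in $L^{p/2}(\nu_{\vec w})$ when $p\ge 2$ and by the subadditivity of $t\mapsto t^{p/2}$ when $0<p<2$; this works precisely because the exponent of $[\vec w]_{A_{\vec P}}$ in Theorem \ref{thm:sharp} is independent of the aperture and the aperture enters only through the polynomial factor $\alpha^{mn}$. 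The paper instead sums the series pointwise: it feeds the aperture decomposition into the pointwise sparse domination \eqref{eq6-thm1.1}, namely $S_{\alpha,\psi}(\vec f)(x)^2\le c\,\alpha^{2mn}\sup_{\mathscr{D},\mathcal{S}}[\mathcal{A}^2_{\mathscr{D},\mathcal{S}}(|\vec f|)(x)]^2$, so the geometric series $\sum_k 2^{-kn\lambda}2^{2kmn}$ is summed before any integration, and then applies the sparse bound of Theorem \ref{Thm:LMS_sparse_Ap_thm} with $\gamma=2$ once. Your route buys independence from the internal sparse machinery (only the statement of Theorem \ref{thm:sharp} is needed) at the cost of the $p\ge 2$ versus $p<2$ case split; the paper's route avoids any case analysis because the summation happens pointwise, and it makes transparent that both theorems rest on the single sparse estimate for $\mathcal{A}^2_{\mathscr{D},\mathcal{S}}$. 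Both yield the stated bound with the same exponent $\max(\frac12,\frac{p_1'}{p},\dotsc,\frac{p_m'}{p})$.
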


We would like to point out that in the linear case, Theorem \ref{thm:sharp} gives  the sharp weighted bounds with sharp dependence on $\alpha$ whereas Theorem \ref{thm:sharp2} provides  sharp weighted bounds for square functions. See for example \cite{Ler4, Ler5}. Although our conjecture is that these bounds are sharp, we couldn't prove this and leave it as an open problem.

\par The outline of this paper will be as follows. In Section~\ref{Sect:Preliminaries} we establish the notation that we will follow as well as some background which will be helpful in the sequel. Also, the weighted estimates of the operators $\mathcal{A}^\gamma_{\mathscr{D},S}$, which have key roles in the proof of the main result of this paper, will be obtained. In Section~\ref{Sect:3}, we study weak $(p,p)$ estimates for square functions. Finally, Section~\ref{Sect:4} contains the proofs of the main results i.e. Theorem \ref{thm:sharp}, Theorem \ref{thm:sharp2} and Theorem \ref{Thm:LMS_sparse_Ap_thm}.
\par Throughout this paper $A\lesssim B$ will denote $A\leq C B$, where $C$ will denote a positive constant independent of the weight which may change from one line to other.

\bigskip

\textbf{Acknowledgement.} The first author was supported by Australian Research Council. The authors are indebted to Jos\'e M. Conde-Alonso and Guillermo Rey for bringing their attention to \cite{CR, LN} to remove the restriction $p\geq 2$ in Theorem \ref{thm:sharp} and Theorem \ref{thm:sharp2} in the earlier version of the paper. Also, the authors are grateful to Professor Hjalmar Rosengren for several valuable comments which have improved the presentation of the paper.
\section{Preliminaries}
\label{Sect:Preliminaries}

\subsection{Multiple weight theory}
For a general account on multiple weights and related results we refer the interested reader to \cite{LOPTT}. In this section we briefly introduce some definitions and results that we will need.
\par Consider $m$ weights $w_1,\dotsc,w_m$ and denote $\overrightarrow{w}=(w_1,\dotsc,w_m)$.  Also let $1<p_1,\dotsc,p_m<\infty$ and $p$ be numbers such that $\frac{1}{p}=\frac{1}{p_1}+\dotsb+\frac{1}{p_m}$ and denote $\overrightarrow P = (p_1,\dotsc, p_m)$. Set

  $$\nu_{\vec w}:=\prod_{i=1}^m w_i^{\frac{p}{p_i}}.$$
We say that $\vec{w}$ satisfies the $A_{\vec{P}}$ condition if
\begin{equation}\label{eq:multiap_LOPTT}
  [\vec{w}]_{A_{\vec{P}}}:=\sup_{Q}\Big(\frac{1}{|Q|}\int_Q\nu_{\vec w}\Big)\prod_{j=1}^m\Big(\frac{1}{|Q|}\int_Q w_j^{1-p'_j}\Big)^{p/p'_j}<\infty.
\end{equation}
When $p_j=1$, $\Big(\frac{1}{|Q|}\int_Q w_j^{1-p'_j}\Big)^{p/p'_j}$ is understood as $\displaystyle(\inf_Qw_j)^{-p}$. This condition, introduced in \cite{LOPTT}, was shown to characterize the classes of weights for which the multilinear maximal function $\Mm$ is bounded from $L^{p_1}(w_1)\times\dotsb\times L^{p_m}(w_m)$ into $L^p(\nu_{\vec{w}})$ (see \cite[Thm. 3.7]{LOPTT}).

\subsection{Dyadic grids and sparse families}

  For the notion of  {\it general dyadic grid} ${\mathscr{D}}$ we refer to previous papers (e.g. \cite{Ler3} and \cite{Hyt2}).  The collection $\{Q\}$ is called a {\it sparse family} of cubes if there are pairwise disjoint subsets $E(Q) \subset Q$ with $|Q|\le 2|E_Q|$.

\par Let $\sigma\in A_\infty$ where $A_\infty$ is the class of Muckenhoupt weights. We now define the dyadic maximal function with respect to $\sigma$
$$
M_{\sigma}^{\mathscr{D}}(f)(x)= \sup_{  Q\ni x, Q\in  \mathscr{D} } \frac{1}{\sigma(Q)} \int_{Q} |f|\sigma.
$$
By different proofs (see e.g \cite{Moen}), it is well-known that
\begin{equation}\label{bbs}
\| M_{\sigma}^{\mathscr{D}}f\|_{L^p(\sigma) } \leq p' \| f\|_{L^p(\sigma) },~~~~~~~~~~1<p<\infty.
\end{equation}

Finally, given a sparse family $\mathcal{S}$ over a dyadic grid $\mathscr{D}$ and $\gamma\geq 1$, a \textit{multilinear sparse operator} is an averaging operator over $\mathcal{S}$ of the form

  \begin{equation*}
    \mathcal{A}^\gamma_{\mathscr{D},\mathcal{S}}(\vec{f})(x)=\Big[\sum_{Q\in \mathcal{S}} \Big(\prod_{i=1}^m (f_i)_{Q}\Big)^\gamma \chi_{Q}(x)\Big]^{1/\gamma}.
  \end{equation*}

These operators verify the following multilinear $A_p$ theorem that was proved in \cite{DLP} and \cite[Thm. 3.2.]{LMS} for $\gamma=1$. In Section \ref{Sec4}, we prove the similar estimate for $\gamma\geq 1$.

  \begin{theorem}
  \label{Thm:LMS_sparse_Ap_thm}
    Suppose that $1<p_1,\dotsc,p_m<\infty$ with $\tfrac{1}{p_1}+\dotsb+\tfrac{1}{p_m}=\tfrac{1}{p}$ and $\vec{w}\in A_{\vec{P}}$. Then
        \begin{equation*}
          \|\mathcal{A}^\gamma_{\mathscr{D},\mathcal{S}}(\vec{f})\|_{L^p(\nu_{\vec{w}})} \lesssim [\vec{w}]_{A_{\vec{P}}}^{\max(\f{1}{\gamma},\tfrac{p_1'}{p},\dotsc,\tfrac{p_m'}{p})} \prod_{i=1}^m \|f_i\|_{L^{p_i}(w_i)}.
        \end{equation*}
  \end{theorem}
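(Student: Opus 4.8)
The plan is to reduce the case of general $\gamma\ge 1$ to the known case $\gamma=1$ (proved in \cite{DLP} and \cite[Thm.~3.2]{LMS}), exploiting the fact that raising the inner sum to the power $1/\gamma$ only helps. First I would observe that, since $\ell^1\hookrightarrow\ell^\gamma$ for $\gamma\ge 1$, we have the pointwise domination
\begin{equation*}
\mathcal{A}^\gamma_{\mathscr{D},\mathcal{S}}(\vec{f})(x)=\Big[\sum_{Q\in\mathcal{S}}\Big(\prod_{i=1}^m(f_i)_Q\Big)^\gamma\chi_Q(x)\Big]^{1/\gamma}\le\sum_{Q\in\mathcal{S}}\Big(\prod_{i=1}^m(f_i)_Q\Big)\chi_Q(x)=\mathcal{A}^1_{\mathscr{D},\mathcal{S}}(\vec{f})(x),
\end{equation*}
valid for nonnegative $f_i$. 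Combined with the $\gamma=1$ result this already yields the bound with exponent $\max(1,p_1'/p,\dots,p_m'/p)$, which is the desired exponent whenever $\max(p_1'/p,\dots,p_m'/p)\ge 1$. So the only remaining regime is $1\le\gamma$ together with $p_j'/p<1$ for all $j$, i.e. when the exponent $\max(1/\gamma,p_1'/p,\dots,p_m'/p)$ might be strictly smaller than $\max(1,\dots)$; there the target exponent is $1/\gamma$ (when $1/\gamma\ge\max_j p_j'/p$) and we must do genuine work to gain the improvement over the trivial $\ell^1$ bound.

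For that regime I would run the standard duality/interpolation argument adapted to the exponent $1/\gamma$. The key point is that $\|\mathcal{A}^\gamma_{\mathscr{D},\mathcal{S}}(\vec f)\|_{L^p(\nu_{\vec w})}^\gamma=\big\|\sum_{Q\in\mathcal{S}}\prod_i(f_i)_Q^\gamma\chi_Q\big\|_{L^{p/\gamma}(\nu_{\vec w})}$, so after substituting $g_i=f_i^\gamma$, $q_i=p_i/\gamma$, $q=p/\gamma$ (noting $1/q=\sum_i 1/q_i$), the problem becomes exactly a bound for the linear sparse operator $\mathcal{A}^1_{\mathscr{D},\mathcal{S}}(\vec g)$ on $L^{q}(\nu)$, where now the exponents $q_i$ may be less than $1$. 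One then applies the sparse testing/dual-weight argument of \cite{LMS}: pair $\sum_Q\prod_i(g_i)_Q\chi_Q$ against $h\in L^{(q)'}(\nu)$, reduce to estimating $\sum_Q\prod_i(g_i)_Q\,(h\nu)_Q|Q|$, and bound each average $(g_i)_Q$ by $(g_i\sigma_i^{-1})_{Q,\sigma_i}$ against the dual weight $\sigma_i=w_i^{1-q_i'}$ when $q_i>1$, or handle the $q_i\le 1$ factor directly via the $A_{\vec P}$ (equivalently $A_{\vec Q}$) constant and Hölder's inequality with the sparseness of $\mathcal{S}$. The $A_{\vec Q}$ constant of $(w_1,\dots,w_m)$ relative to the exponents $(q_1,\dots,q_m)$ is comparable to $[\vec w]_{A_{\vec P}}$ up to the relevant power, and tracking this power through the computation produces $[\vec w]_{A_{\vec P}}^{\max(1,q_1'/q,\dots,q_m'/q)}$ for the $L^q$ bound, which after taking $\gamma$-th roots becomes $[\vec w]_{A_{\vec P}}^{\max(1/\gamma,p_1'/p,\dots,p_m'/p)}$ since $q_i'/q=p_i'/p$ when $q_i>1$. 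Alternatively — and this is probably the cleanest route — one can cite \cite{CR, LN} (referenced in the Acknowledgement for exactly this purpose) which provide the sparse domination machinery valid below the exponent $1$, and invoke their multilinear $A_{\vec P}$ bound directly with exponents $(q_1,\dots,q_m,q)$.

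The main obstacle will be the bookkeeping of the $A_{\vec P}$ constant under the rescaling $p_i\mapsto q_i=p_i/\gamma$: one has to check that the $A_{\vec Q}$ constant of the weights remains controlled by $[\vec w]_{A_{\vec P}}$ (it is in fact unchanged, since $\nu_{\vec w}$ and the dual weights $w_i^{1-q_i'}$ transform consistently and the defining supremum in \eqref{eq:multiap_LOPTT} is invariant under this scaling), and that the three regimes — all $q_i>1$; some $q_i\le 1$; the borderline where $1/\gamma$ versus $\max_j p_j'/p$ decides the exponent — are all subsumed by the single formula $\max(1/\gamma,p_1'/p,\dots,p_m'/p)$. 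Once this is verified, the estimate follows by combining the pointwise $\ell^1$ domination (handling $\max_j p_j'/p\ge 1$) with the rescaled $\gamma=1$ result of \cite{LMS} (handling the complementary regime), and no new hard analysis is required beyond what is already in \cite{DLP, LMS, CR, LN}.
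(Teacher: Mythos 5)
Your opening observation is fine but only covers the easy regime: the pointwise bound $\mathcal{A}^\gamma_{\mathscr{D},\mathcal{S}}(\vec f)\le\mathcal{A}^1_{\mathscr{D},\mathcal{S}}(\vec f)$ plus the $\gamma=1$ theorem gives the exponent $\max(1,p_1'/p,\dots,p_m'/p)$, which matches the claim only when some $p_j'/p\ge1$. The core of your plan for the remaining regime — substituting $g_i=f_i^\gamma$, $q_i=p_i/\gamma$, $q=p/\gamma$ and invoking the $\gamma=1$ result (or \cite{CR,LN}) with exponents $(q_1,\dots,q_m,q)$ — has a genuine gap. First, the hypothesis is $\vec w\in A_{\vec P}$, not $\vec w\in A_{\vec Q}$ with $\vec Q=\vec P/\gamma$; since $q_i<p_i$, the class $A_{\vec Q}$ is strictly smaller than $A_{\vec P}$, and your assertion that the defining supremum \eqref{eq:multiap_LOPTT} is invariant under this rescaling is false: $\nw$ is indeed unchanged, but the dual weights change from $w_i^{1-p_i'}$ to $w_i^{1-q_i'}$, so $[\vec w]_{A_{\vec Q}}$ may be infinite. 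Second, the exponent bookkeeping fails even formally: $q_i'/q=\gamma p_i/\big((p_i-\gamma)p\big)$ while $p_i'/p=p_i/\big((p_i-1)p\big)$, so $q_i'/q\neq p_i'/p$ unless $\gamma=1$, and $\frac1\gamma\max(1,q_1'/q,\dots,q_m'/q)$ does not reduce to $\max(1/\gamma,p_1'/p,\dots,p_m'/p)$. Third, the substitution itself only yields the Jensen inequality $[\mathcal{A}^\gamma_{\mathscr{D},\mathcal{S}}(\vec f)]^\gamma\le\mathcal{A}^1_{\mathscr{D},\mathcal{S}}(f_1^\gamma,\dots,f_m^\gamma)$, which discards exactly the structural gain the theorem quantifies; already in the linear case $m=1$, $\gamma=2$ your reduction would require $w\in A_{p/2}$, strictly stronger than $w\in A_p$, so no amount of bookkeeping can rescue this route. (The references \cite{CR,LN} serve a different purpose in the paper — they enter in Proposition \ref{prop3.3} to remove the restriction $p\ge2$ — and do not supply the rescaled weighted bound you want.)

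The paper's proof never rescales the exponents or the weight class. It sets $\sigma_i=w_i^{1-p_i'}$, $q=\min\{p,\gamma\}$, uses the pointwise bound $\mathcal{A}^\gamma_{\mathscr{D},\mathcal{S}}\le\mathcal{A}^q_{\mathscr{D},\mathcal{S}}$, and dualizes $\big\|[\mathcal{A}^q_{\mathscr{D},\mathcal{S}}(\vec f\sigma)]^q\big\|_{L^{p/q}(\nw)}$ against $g\in L^{(p/q)'}(\nw)$ (legitimate since $p/q\ge1$). The resulting sum $\sum_{Q\in\mathcal{S}}\int_Q g\,\nw\,\big(\prod_i\frac1{|Q|}\int_Qf_i\sigma_i\big)^q$ is then estimated directly by inserting $[\vec w]_{A_{\vec P}}^{\beta q}$ with $\beta=\max(1/q,p_1'/p,\dots,p_m'/p)$ (the choice of $\beta$ is exactly what makes all exponents of $\nw(Q)$ and $\sigma_i(Q)$ nonnegative), using sparseness $|Q|\le2|E_Q|$, the H\"older estimate \eqref{eq2-thm1.1}, disjointness of the sets $E_Q$, and the bounds \eqref{bbs} for $M^{\mathscr D}_{\nw}$ and $M^{\mathscr D}_{\sigma_i}$; one checks $\beta=\max(1/\gamma,p_1'/p,\dots,p_m'/p)$ whether $\gamma\le p$ or $\gamma>p$. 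If you want to repair your argument, you should work with the quadratic (or $q$-th power) structure directly in this way rather than reducing to $\mathcal{A}^1$ applied to powers of the $f_i$.
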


\subsection{A local mean oscillation formula}

The key ingredient to prove our main results is  Lerner's local oscillation formula from \cite{Ler3}. We will need to introduce the following notions to understand his result.

\par By a median value of a measurable function $f$ on a set $Q$ we mean a possibly nonunique, real number $m_f(Q)$ such that
$$\max\big(|\{x\in Q: f(x)>m_f(Q)\}|,|\{x\in Q: f(x)<m_f(Q)\}|\big)\le |Q|/2.$$

The decreasing rearrangement of a measurable function $f$ on ${\mathbb R}^n$ is defined by
$$f^*(t)=\inf\{\a>0:|\{x\in {\mathbb R}^n:|f(x)|>\a\}|<t\}\quad(0<t<\infty).$$
The local mean oscillation of $f$ is
$$\o_{\la}(f;Q)=\inf_{c\in {\mathbb R}}
\big((f-c)\chi_{Q}\big)^*\big(\la|Q|\big)\quad(0<\la<1).$$
Observe that it follows from the definitions that
\begin{equation}\label{pro1}
|m_f(Q)|\le (f\chi_Q)^*(|Q|/2).
\end{equation}
Given a cube $Q_0$, the dyadic local sharp maximal function $m^{\#,d}_{\la;Q_0}f$ is defined by
$$m^{\#,d}_{\la;Q_0}f(x)=\sup_{x\in Q'\in {\mathcal D}(Q_0)}\o_{\la}(f;Q').$$
The following theorem was proved by Hyt\"onen \cite[Theorem 2.3]{Hyt2} in order to improve  Lerner’s formula given in \cite{Ler3} by getting rid of the local sharp maximal function.
\begin{theorem}\label{decom1}
Let $f$ be a measurable function on ${\mathbb R}^n$ and let $Q_0$ be a fixed cube. Then there exists a
(possibly empty) sparse family $\mathcal{S}$ of cubes $Q\in {\mathcal D}(Q_0)$ such that for a.e. $x\in Q_0$,
\begin{equation}\label{eq:LMOD_Lerner}
  |f(x)-m_f(Q_0)|\le 2\sum_{Q \in \mathcal{S}} \o_{\frac{1}{2^{n+2}}}(f;Q)\chi_{Q}(x).
\end{equation}
\end{theorem}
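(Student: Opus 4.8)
The plan is to build $\mathcal{S}$ by a Calder\'on--Zygmund stopping-time procedure tied to the median values $m_f(\cdot)$, pushing all of the oscillation into one local estimate. Throughout, abbreviate $\o(f;Q):=\o_{1/2^{n+2}}(f;Q)$. The first step is the pointwise--measure bound
\begin{equation}\label{planloc}
\big|\{x\in Q:\ |f(x)-m_f(Q)|>2\,\o(f;Q)\}\big|\le \tfrac{1}{2^{n+2}}\,|Q|,\qquad\text{for every cube }Q.
\end{equation}
To prove \eqref{planloc}, fix $\ve>0$ and choose $c\in\R$ with $\big((f-c)\chi_Q\big)^*\!\big(\tfrac{|Q|}{2^{n+2}}\big)<\o(f;Q)+\ve$. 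From the definition of the decreasing rearrangement, $\big|\{x\in Q:|f(x)-c|>\o(f;Q)+\ve\}\big|\le \tfrac{|Q|}{2^{n+2}}$; and since $\tfrac{|Q|}{2^{n+2}}\le\tfrac{|Q|}{2}$, monotonicity of the rearrangement together with \eqref{pro1} applied to $f-c$ gives $|m_f(Q)-c|\le\big((f-c)\chi_Q\big)^*\!\big(\tfrac{|Q|}{2}\big)\le\o(f;Q)+\ve$. By the triangle inequality $\{x\in Q:|f(x)-m_f(Q)|>2(\o(f;Q)+\ve)\}\subseteq\{x\in Q:|f(x)-c|>\o(f;Q)+\ve\}$, and letting $\ve\downarrow0$ yields \eqref{planloc}.

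Next I iterate. Put $\mathcal{S}_0=\{Q_0\}$. Given a cube $Q$ already selected, set $E_Q:=\{x\in Q:|f(x)-m_f(Q)|>2\o(f;Q)\}$, so that $|E_Q|\le 2^{-n-2}|Q|<2^{-n-1}|Q|$ by \eqref{planloc}, and let $\{P^Q_j\}_j$ be the maximal cubes $P\in\mathcal{D}(Q)$ with $P\subsetneq Q$ and $|P\cap E_Q|\ge 2^{-n-1}|P|$; declare these the next generation of descendants of $Q$, and set $\mathcal{S}:=\bigcup_k\mathcal{S}_k$. Maximality and the fact that a dyadic parent has $2^n$ times the volume of its child give $|P^Q_j\cap E_Q|<\tfrac12|P^Q_j|$ for every $j$; summing the selection inequality over the pairwise disjoint $P^Q_j$ gives $\sum_j|P^Q_j|\le 2^{n+1}|E_Q|\le\tfrac12|Q|$; and Lebesgue differentiation along dyadic cubes shows $E_Q\subseteq\bigcup_jP^Q_j$ up to a null set. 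From this I record two facts: first, $E(Q):=Q\setminus\bigcup_jP^Q_j$ satisfies $|E(Q)|\ge\tfrac12|Q|$ and---because, by the tree structure of the construction, every cube of $\mathcal{S}$ strictly contained in $Q$ lies inside one of the $P^Q_j$---the sets $E(Q)$, $Q\in\mathcal{S}$, are pairwise disjoint, so $\mathcal{S}$ is sparse in the sense of Section~\ref{Sect:Preliminaries}; second, $|P^Q_j\setminus E_Q|>\tfrac12|P^Q_j|$ means that $|f-m_f(Q)|\le 2\o(f;Q)$ on more than half of $P^Q_j$, and a short argument using only the defining property of a median then forces
\begin{equation}\label{planmed}
|m_f(P^Q_j)-m_f(Q)|\le 2\,\o(f;Q).
\end{equation}

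Finally I run the telescoping argument. Let $N$ be the union of $\bigcap_k\bigcup_{Q\in\mathcal{S}_k}Q$---a null set, since each $\mathcal{S}_k$ is a pairwise disjoint family and $\sum_{Q\in\mathcal{S}_k}|Q|\le 2^{-k}|Q_0|\to0$---together with the countably many null sets $E_Q\setminus\bigcup_jP^Q_j$, $Q\in\mathcal{S}$. Fix $x\in Q_0\setminus N$. The cubes of $\mathcal{S}$ containing $x$ form a finite nested chain $Q_0=Q^{(0)}\supsetneq Q^{(1)}\supsetneq\dots\supsetneq Q^{(\ell)}$ in which $x$ lies in no descendant of $Q^{(\ell)}$; since $x$ avoids $N$ this forces $x\notin E_{Q^{(\ell)}}$, i.e.\ $|f(x)-m_f(Q^{(\ell)})|\le 2\o(f;Q^{(\ell)})$. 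Combining this with \eqref{planmed} through a telescoping sum,
\begin{equation*}
|f(x)-m_f(Q_0)|\le |f(x)-m_f(Q^{(\ell)})|+\sum_{i=1}^{\ell}\big|m_f(Q^{(i)})-m_f(Q^{(i-1)})\big|\le 2\sum_{i=0}^{\ell}\o(f;Q^{(i)}),
\end{equation*}
and since $\{Q^{(0)},\dots,Q^{(\ell)}\}$ is exactly $\{Q\in\mathcal{S}:x\in Q\}$, the right-hand side equals $2\sum_{Q\in\mathcal{S}}\o(f;Q)\chi_Q(x)$, which is \eqref{eq:LMOD_Lerner}.

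The step I expect to be the main obstacle is the null-set bookkeeping---that the selected cubes cover each $E_Q$ up to measure zero, that the algorithm terminates at a.e.\ $x$, and that the chain it produces through $x$ is precisely the family of $\mathcal{S}$-cubes containing $x$---together with the deduction of \eqref{planmed} from the median property. The numerical value $2^{-n-2}$ is chosen exactly so that $2^{n+1}|E_Q|\le\tfrac12|Q|$, which simultaneously yields the sparseness of $\mathcal{S}$ and the geometric decay $\sum_{Q\in\mathcal{S}_k}|Q|\le 2^{-k}|Q_0|$ of the generations.
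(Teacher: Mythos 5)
Your argument is correct: the local estimate \eqref{planloc}, the selection of maximal cubes with $|P\cap E_Q|\ge 2^{-n-1}|P|$ (giving both sparseness and the geometric decay of generations), the median comparison \eqref{planmed}, and the telescoping all check out, including the boundary case where the dyadic parent of a selected cube is $Q$ itself. The paper does not prove this theorem but cites Hyt\"onen \cite{Hyt2}, and your proof is essentially the standard stopping-time/median-telescoping argument behind that cited result, so there is nothing genuinely different to compare.
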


\section{Weak $(p,p)$ estimate for square functions}\label{Sect:3}
For a measurable function $F\in \mathbb{R}^{n+1}_+$, we define
$$
S_\alpha(F)(x)=\Big(\int_{\Gamma_\alpha(x)}|F(y,t)|^2\f{dydt}{t^{n+1}}\Big)^{1/2},
$$
where $\Gamma_\alpha(x)=\{(y,t)\in \mathbb{R}^{n+1}_+: |x-y|<\alpha t\}$. We prove the following result on weak type $(p,p)$ estimate for $S_\alpha$.
\begin{lemma}\label{weakestimate}
Let $\alpha\geq 1$. Then for $0<p<2$ there exists $c_p$ so that
$$
\|S_\alpha(F)\|_{L^{p,\vc}}\leq c_p \alpha^{n/p}\|S_1(F)\|_{L^{p,\vc}}.
$$
\end{lemma}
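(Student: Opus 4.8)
The plan is to control $S_\alpha(F)$ pointwise by a suitable average of $S_1$ applied to translates of $F$, and then exploit the $L^{p,\infty}$ quasi-norm behaviour under such averaging. First I would observe that the cone $\Gamma_\alpha(x)$ of aperture $\alpha$ at $x$ is covered by boundedly many (in fact $\lesssim \alpha^n$) cones of aperture $1$ whose vertices lie in a ball $B(x, c\alpha t)$; more precisely, for each $(y,t)\in\Gamma_\alpha(x)$ one has $(y,t)\in\Gamma_1(z)$ for every $z$ with $|z-y|<t$, and the set of such admissible $z$ has measure $\sim t^n$, while $z$ ranges over $B(x,(\alpha+1)t)$, a ball of measure $\sim \alpha^n t^n$. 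This suggests writing, for an appropriate constant $C_n$,
\begin{equation*}
S_\alpha(F)(x)^2 \lesssim \frac{1}{|B(x,C_n\alpha)|}\int_{B(x,C_n\alpha)} \Big(\text{something like } S_1(F)(z)^2\Big)\,dz,
\end{equation*}
but because the radius of the averaging ball depends on $t$ and not just on $x$, the clean way to do this is to pass to the ``fractional'' or truncated cones and sum over dyadic scales of $t$, or better, to use the standard trick of integrating in $z$ first.

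The cleanest route I would actually follow is the classical Fefferman--Stein type argument adapted to weak norms. Fix $x$; for $(y,t)\in\Gamma_\alpha(x)$ and any $z\in B(y,t)$ we have $(y,t)\in\Gamma_1(z)$ and also $|x-z|\le |x-y|+|y-z|<(\alpha+1)t$, hence $z\in B(x,(\alpha+1)t)$. Therefore
\begin{equation*}
S_\alpha(F)(x)^2 = \int_{\Gamma_\alpha(x)}|F(y,t)|^2\frac{dy\,dt}{t^{n+1}} \le \int_{\Gamma_\alpha(x)}|F(y,t)|^2\Big(\frac{1}{c_n t^n}\int_{B(y,t)}\chi_{B(x,(\alpha+1)t)}(z)\,dz\Big)\frac{dy\,dt}{t^{n+1}},
\end{equation*}
and interchanging the order of integration, the inner region in $(y,t)$ for fixed $z$ is contained in $\Gamma_1(z)$, so the right-hand side is bounded by $\frac{1}{c_n}\int_{\mathbb{R}^n} \chi_{\{|x-z|<(\alpha+1)t\ \text{for some admissible }t\}} \cdots$. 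Handling the $t$-dependence properly gives, after a short computation, a pointwise bound of the form $S_\alpha(F)(x) \lesssim \alpha^{n/2}\, \big(M(S_1(F)^2)(x)\big)^{1/2}$ where $M$ is the Hardy--Littlewood maximal operator — this is essentially the aperture comparison already present in the literature (Coifman--Meyer--Stein), but the exponent $\alpha^{n/2}$ inside the square root only yields $\alpha^{n/2}$ on $L^p$, which is \emph{not} good enough: we want $\alpha^{n/p}$ on $L^{p,\infty}$ for $p<2$.

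To get the sharp $\alpha^{n/p}$ I would instead argue directly at the level of distribution functions without passing through $M$. The key point is the change-of-aperture identity $|\{S_\alpha(F)>\lambda\}| \lesssim \alpha^n\,|\{S_1(F)>c\lambda\}|$ for a dimensional $c>0$, which holds because the ``shadow'' of the region where $S_\alpha(F)$ is large is comparable, up to the factor $\alpha^n$ coming from the aperture, to the shadow of where $S_1(F)$ is large; concretely one uses that if $S_\alpha(F)(x)>\lambda$ then a fixed proportion of the mass $\int_{\Gamma_\alpha(x)}|F|^2 t^{-n-1}$ comes from $(y,t)$ for which $\{z: (y,t)\in\Gamma_1(z)\}$ forces $S_1(F)(z) \gtrsim \lambda$ on a set of $z$'s of measure $\gtrsim$ (a controlled fraction of $t^n$), and a covering/Vitali argument converts this into the displayed measure estimate with the single loss $\alpha^n$. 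Then multiplying by $\lambda^p$ and taking the supremum over $\lambda$ gives exactly
\begin{equation*}
\|S_\alpha(F)\|_{L^{p,\infty}}^p = \sup_\lambda \lambda^p |\{S_\alpha(F)>\lambda\}| \lesssim \alpha^n \sup_\lambda \lambda^p |\{S_1(F)>c\lambda\}| = c^{-p}\alpha^n \|S_1(F)\|_{L^{p,\infty}}^p,
\end{equation*}
i.e. $\|S_\alpha(F)\|_{L^{p,\infty}} \lesssim \alpha^{n/p}\|S_1(F)\|_{L^{p,\infty}}$, as claimed, and this route never used $p<2$ except that no stronger statement is expected there; in fact the restriction $p<2$ enters only when one wants this to be genuinely an improvement over the trivial $L^2$ bound, since on $L^2$ one has the better $\alpha^{n/2}$. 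The main obstacle is making the geometric covering argument in the distribution-function estimate precise — in particular, quantifying ``a fixed proportion of the mass comes from admissible $(y,t)$'' uniformly in $F$ and $x$, which requires a careful Whitney/Vitali decomposition of the cone and an honest Fubini in the $(y,t,z)$ variables; everything else is bookkeeping.
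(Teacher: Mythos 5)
Your plan hinges on the claimed change-of-aperture distributional inequality
$|\{S_\alpha(F)>\lambda\}|\lesssim \alpha^{n}\,|\{S_1(F)>c\lambda\}|$ with a dimensional $c>0$, but this inequality is false, and the covering argument you defer to the end cannot repair it. If $S_\alpha(F)(x)>\lambda$, the mass $\int_{\Gamma_\alpha(x)}|F|^2\,t^{-n-1}dy\,dt$ is spread over a cone whose cross-sections are $\sim\alpha^n$ times larger than those of unit cones, so it may be shared among $\sim\alpha^n$ essentially disjoint unit cones; the Fubini/Vitali argument then only forces $S_1(F)(z)\gtrsim \alpha^{-n/2}\lambda$ on sets of measure $\sim t^n$, i.e.\ one gets $c\sim\alpha^{-n/2}$, which destroys the bound. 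A concrete counterexample: take $|F|=1$ on $B(0,\alpha)\times(1,2)$ and $F=0$ elsewhere. Then $S_1(F)\lesssim 1$ everywhere, while $S_\alpha(F)\gtrsim \alpha^{n/2}$ on a set of measure $\gtrsim\alpha^n$; choosing $\lambda\sim\alpha^{n/2}$ makes your right-hand side zero and the left-hand side $\gtrsim\alpha^n$. The same example shows $\|S_\alpha(F)\|_{L^{p,\infty}}/\|S_1(F)\|_{L^{p,\infty}}\gtrsim\alpha^{n/2}$, so the lemma itself fails for $p>2$; hence your remark that ``this route never used $p<2$'' is precisely the symptom that the route cannot work, since any correct proof must use $p<2$ in an essential way.

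The paper's proof (following Lerner and Torchinsky) replaces your pure distributional comparison by a good-lambda/local-$L^2$ argument, and this is where $p<2$ enters. Set $\Omega_\lambda=\{S_1(F)>\lambda\}$ and $U_\lambda=\{M\chi_{\Omega_\lambda}>(2\alpha)^{-n}\}$; then $|U_\lambda|\lesssim \alpha^n|\Omega_\lambda|$ by the weak $(1,1)$ bound for $M$, and off the dilated set one has the localized $L^2$ aperture comparison
$\int_{\mathbb{R}^n\setminus U_\lambda}S_\alpha(F)^2\le 2\alpha^n\int_{\mathbb{R}^n\setminus\Omega_\lambda}S_1(F)^2$
(Torchinsky, p.~315). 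Chebyshev on $\mathbb{R}^n\setminus U_\lambda$ plus the layer-cake estimate
$\int_{\mathbb{R}^n\setminus\Omega_\lambda}S_1(F)^2\le 2\int_0^\lambda t\,|\{S_1(F)>t\}|\,dt\le \tfrac{2}{2-p}\lambda^{2-p}\|S_1(F)\|_{L^{p,\infty}}^p$
(finite only because $p<2$) then gives $\lambda^p|\{S_\alpha(F)>\lambda\}|\lesssim \alpha^n\big(\lambda^p|\Omega_\lambda|+\|S_1(F)\|_{L^{p,\infty}}^p\big)$, which is the lemma. Your first, discarded observation (the pointwise bound $S_\alpha(F)^2\lesssim\alpha^nM(S_1(F)^2)$) is actually much closer in spirit to the correct proof than your ``key point''; what is missing is the localization of that $L^2$ comparison to the complement of $U_\lambda$ together with the truncated distribution-function integral, rather than any covering refinement of a level-set comparison.
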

\begin{proof}
Note that the case $p=1$ was proved in \cite{Ler5}. We now adapt the argument in   \cite{Ler5} to our present situation.

For $\lambda>0$ we set
$$
\Omega_\lambda=\{x: S_1(F)(x)>\lambda\} \ \ \text{and} \ \ \ U_\lambda=\{x: M\chi_{\Omega_\lambda}(x)>1/(2\alpha)^n\},
$$
where $M$ is the Hardy-Littlewood maximal function. Then by \cite[p. 315]{T}, we have
$$
\int_{\mathbb{R}^n \backslash U_\lambda} S_\alpha(F)(x)^2dx \leq 2\alpha^n \int_{\mathbb{R}^n \backslash \Omega_\lambda} S_1(F)(x)^2dx.
$$

This in combination with the weak type $(1,1)$ estimates of $M$ and Chebyshev's inequality implies that
$$
\begin{aligned}
\left|\left\{x: S_\alpha(F)(x)>\lambda\right\}\right|&\leq |U_\lambda| +\left|\left\{x\in \mathbb{R}^n \backslash U_\lambda: S_\alpha(F)(x)>\lambda\right\}\right|\\
&\leq c_n\alpha^n\left|\left\{x: S_1(F)(x)>\lambda\right\}\right| +\f{1}{\lambda^2}\int_{\mathbb{R}^n \backslash U_\lambda} S_\alpha(F)(x)^2dx\\
&\leq c_n\alpha^n\left|\left\{x: S_1(F)(x)>\lambda\right\}\right| +\f{2\alpha^n}{\lambda^2}\int_{\mathbb{R}^n \backslash \Omega_\lambda} S_1(F)(x)^2dx.
\end{aligned}
$$
On the other hand, we have
$$
\begin{aligned}
\f{2\alpha^n}{\lambda^2}\int_{\mathbb{R}^n \backslash \Omega_\lambda} S_1(F)(x)^2dx&\leq \f{4\alpha^n}{\lambda^2}\int_{0}^\lambda t|\{x: S_1(F)(x)>t\}|dt\\
&\leq \f{4\alpha^n}{\lambda^2}\|S_1(F)\|_{L^{p,\vc}}^p\int_{0}^\lambda t^{1-p}dt\\
&\leq c_p \f{\alpha^n}{\lambda^p}\|S_1(F)\|_{L^{p,\vc}}^p.
\end{aligned}
$$
Therefore,
$$
\begin{aligned}
\lambda^p\left|\left\{x: S_\alpha(F)(x)>\lambda\right\}\right| &\leq c_n\alpha^n [\lambda^p\left|\left\{x: S_1(F)(x)>\lambda\right\}\right| + \|S_1(F)\|_{L^{p,\vc}}^p],
\end{aligned}
$$
which implies that
$$
\|S_\alpha(F)\|_{L^{p,\vc}}\leq c_p\alpha^{n/p}\|S_1(F)\|_{L^{p,\vc}}.
$$
This completes our proof.
\end{proof}
\section{Proof of main results}\label{Sec4}
\label{Sect:4}
\begin{proof}[Proof of Theorem~\ref{Thm:LMS_sparse_Ap_thm}]
To prove this theorem, we borrow some ideas in \cite[Theorem 3.2]{LMS}. However, we refine the argument in \cite[Theorem 3.2]{LMS} to provide a direct proof, and hence we avoid a duality argument for multilinear operators which may not be applicable in our setting.

Throughout the proof, let $\sigma_i=w_i^{1-p_i'}$, $\vec{f}\sigma =(f_1\sigma_1,\dotsc, f_m\sigma_m)$ and $f_i\geq 0$. Since we may assume that $w\in A_{\vec{P}}$, we have $\sigma_i, \nw \in A_\infty$ (see \cite[Theorem 3.6]{LOPTT}).

It suffices to prove that
\begin{equation}\label{eq1-thm1.1}
\|\mathcal{A}^\gamma_{\mathscr{D},\mathcal{S}}(\vec{f}\sigma)\|_{L^p(\nu_{\vec{w}})} \lesssim [\vec{w}]_{A_{\vec{P}}}^{\max(\f{1}{\gamma},\tfrac{p_1'}{p},\dotsc,\tfrac{p_m'}{p})} \prod_{i=1}^m \|f_i\|_{L^{p_i}(\sigma_i)}.
\end{equation}

Let $q=\min\{p,\gamma \}$. We get

$$
\begin{aligned}
\|\mathcal{A}^\gamma_{\mathscr{D},\mathcal{S}}(\vec{f}\sigma)\|_{L^p(\nu_{\vec{w}})}^q&= \left(\int_{\Rn} \Big[\sum_{Q\in \mathcal{S}} \Big(\prod_{i=1}^m \f{1}{|Q|}\int_Q f_i\sigma_i\Big)^\gamma \chi_{Q}(x)\Big]^{\frac{p}{\gamma}}\nw       \right)^{\frac{q}{p}}\\
 &\leq \left(\int_{\Rn} \Big[\sum_{Q\in \mathcal{S}} \Big(\prod_{i=1}^m \f{1}{|Q|}\int_Q f_i\sigma_i\Big)^q \chi_{Q}(x)\Big]^{\frac{p}{q}}\nw       \right)^{\frac{q}{p}},
\end{aligned}
$$
where we used the fact $q \leq \gamma$. Thus
\begin{equation}\label{eqdual-thm1.1}
\|\mathcal{A}^\gamma_{\mathscr{D},\mathcal{S}}(\vec{f}\sigma)\|_{L^p(\nu_{\vec{w}})}^q \leq \|[\mathcal{A}^q_{\mathscr{D},\mathcal{S}}(\vec{f}\sigma)]^q\|_{L^{p/q}(\nu_{\vec{w}})}.
\end{equation}

Denote $\beta=\max(\f{1}{q},\tfrac{p_1'}{p},\dotsc,\tfrac{p_m'}{p})$.  Also assume that $g\in L^{(p/q)'}(\nu_{\vec{w}})$ and $g\geq 0$.

 We have
$$
\int_{\mathbb{R}^n}[\mathcal{A}^q_{\mathscr{D},\mathcal{S}}(\vec{f}\sigma)]^q g\nu_{\vec{w}}=\sum_{Q\in \mathcal{S}}\int_Q g\nw \times \Big(\prod_{i=1}^m\f{1}{|Q|}\int_Q f_i\sigma_i\Big)^q.
$$
From this and the definition of $[\vec{w}]_{A_{\vec{P}}}$, we obtain
$$
\begin{aligned}
\sum_{Q\in \mathcal{S}}\int_Q g\nw& \times \Big(\prod_{i=1}^m\f{1}{|Q|}\int_Q f_i\sigma_i\Big)^q\\
&\leq [\vec{w}]_{A_{\vec{P}}}^{\beta q}\sum_{Q\in \mathcal{S}}\f{|Q|^{mq(\beta p-1)}}{\nw(Q)^{\beta q-1}\prod_{i=1}^m \sigma_i(Q)^{q(\beta p/p'_i-1)}}\times \Big(\f{1}{\nw(Q)}\int_Q g\nw\Big) \times \Big(\prod_{i=1}^m \f{1}{\sigma_i(Q)}\int_Q f_i\sigma_i\Big)^q\\
&\leq 2^{mq(\beta p-1)}[\vec{w}]_{A_{\vec{P}}}^{\beta q}\sum_{Q\in \mathcal{S}}\f{|E_Q|^{mq(\beta p-1)}}{\nw(E_Q)^{\beta q-1}\prod_{i=1}^m \sigma_i(E_Q)^{q(\beta p/p'_i-1)}}\times \Big(\f{1}{\nw(Q)}\int_Q g\nw\Big)\\
& \ \ \ \times \Big(\prod_{i=1}^m \f{1}{\sigma_i(Q)}\int_Q f_i\sigma_i\Big)^q
\end{aligned}
$$
where in the last inequality we used the facts $ \nw(Q) \geq \nw(E_Q)$, $\sigma_i(Q)\geq \sigma_i(E_Q)$ and  the positivity of the exponents. On the other hand, by H\"older's inequality, we have
\begin{equation}\label{eq2-thm1.1}
|E_Q|=\int_{E_Q}\nw^{\f{1}{mp}}\prod_{i=1}^m\sigma_i^{\f{1}{mp'_i}}\leq \nw(E_Q)^{\f{1}{mp}}\prod_{i=1}^m\sigma_i(E_Q)^{\f{1}{mp'_i}}.
\end{equation}
Insert this into the estimate above to conclude that
$$
\begin{aligned}
\sum_{Q\in \mathcal{S}}&\int_Q g\nw \times \Big(\prod_{i=1}^m\f{1}{|Q|}\int_Q f_i\sigma_i\Big)^q\\
&\leq 2^{mq(\beta p-1)}[\vec{w}]_{A_{\vec{P}}}^{\beta q}\sum_{Q\in \mathcal{S}} \left[ \Big(\f{1}{\nw(Q)}\int_Q g\nw\Big)\nw(E_Q)^{\f{1}{(p/q)^{'}}} \right]     \times \left[\prod_{i=1}^m \Big(\f{1}{\sigma_i(Q)}\int_Q f_i\sigma_i\Big) \sigma_i(E_Q)^{\f{1}{p_i}}          \right]^q
\end{aligned}
$$
which together with H\"older's inequality and the disjointness of the family $\{E_Q\}_{Q\in \mathcal{S}}$ gives
$$
\begin{aligned}
\sum_{Q\in \mathcal{S}}\int_Q g\nw \times \Big(\prod_{i=1}^m\f{1}{|Q|}\int_Q f_i\sigma_i\Big)^q
&\leq 2^{mq(\beta p-1)}[\vec{w}]_{A_{\vec{P}}}^{\beta q} \Big[\sum_{Q\in \mathcal{S}} \Big(\f{1}{\nw(Q)}\int_Q g\nw\Big)^{(p/q)'}\nw(E_Q)\Big]^{\f{1}{(p/q)'}}\\
 & \ \ \ \times \prod_{i=1}^m \Big[\sum_{Q\in \mathcal{S}}\Big(\f{1}{\sigma_i(Q)}\int_Q f_i\sigma_i\Big)^{p_i}\sigma_i(E_Q)\Big]^{q/p_i}\\
&\leq 2^{mq(\beta p-1)}[\vec{w}]_{A_{\vec{P}}}^{\beta q}\|M_{\nw}^{\mathscr{D}}(g)\|_{L^{(p/q)'}(\nw)} \times \prod_{i=1}^m \|M_{\sigma_i}^{\mathscr{D}}(f_i)\|_{L^{p_i}(\sigma_i)}^q\\
&\lesssim 2^{mq(\beta p-1)}[\vec{w}]_{A_{\vec{P}}}^{\beta q} \|g\|_{L^{(p/q)'}(\nw)} \times \prod_{i=1}^m\|f_i\|_{L^{p_i}(\sigma_i)}^q,
\end{aligned}
$$
{where to get the last inequality we applied (\ref{bbs})}. Hence,

$$
\begin{aligned}
\|\mathcal{A}^\gamma_{\mathscr{D},\mathcal{S}}(\vec{f}\sigma)\|^q_{L^{p}(\nw)} &\overset{\text{(\ref{eqdual-thm1.1})}}{\leq}\ \|[\mathcal{A}^q_{\mathscr{D},\mathcal{S}}(\vec{f}\sigma)]^q\|_{L^{p/q}(\nu_{\vec{w}})} \\
&\leq \sup_{\| g\|_{L^{(p/q)'}(\nu_{\vec{w}})}=1 } \int_{\mathbb{R}^n}[\mathcal{A}^q_{\mathscr{D},\mathcal{S}}(\vec{f}\sigma)]^q g\nu_{\vec{w}}\\
&\leq 2^{mq(\beta p-1)}[\vec{w}]_{A_{\vec{P}}}^{\beta q} \times \prod_{i=1}^m\|f_i\|_{L^{p_i}(\sigma_i)}^q.
\end{aligned}
$$

This proves \eqref{eq1-thm1.1}.
\end{proof}

 In order to prove Theorem \ref{thm:sharp}, we use the approach of \cite{Ler5}. Let $\Phi$ be a fixed Schwartz function such that
$$
\chi_{B(0,1)}(x)\leq \Phi(x)\leq \chi_{B(0,2)}(x).
$$
We define
$$
\begin{aligned}
\widetilde{S}_{\alpha, \psi}(\vec{f})(x)&=\Big(\int_{\mathbb{R}^{n+1}_+}\Phi\Big(\f{x-y}{t\alpha}\Big)|\psi_t(\vec{f})(y)|^2\f{dydt}{t^{n+1}}\Big)^{1/2}.
\end{aligned}
$$
It  easy to see that
\begin{equation}\label{cutoffsquarefunction}
S_{\alpha, \psi}(\vec{f})(x)\leq \widetilde{S}_{\alpha, \psi}(\vec{f})(x)\leq S_{2\alpha, \psi}(\vec{f})(x).
\end{equation}

As a generalization of  \cite[Lem. 3.1]{Ler5} for multilinear case, we have
  \begin{prop}\label{prop:osc}
   For any cube $Q\subset\Rn$, $\alpha\geq 1$ and $\delta_0<\min\{\delta,1/2\}$, we have
    \begin{equation*}
      \omega_{\lambda}(\widetilde{S}_{\alpha, \psi}(\vec{f})^2;Q)\leq c_{m,n,\lambda,\psi} \alpha^{2mn} \sum_{l=0}^\infty \frac{1}{2^{l\delta_0}} \Big(\prod_{i=1}^m \frac{1}{|2^l Q|} \int_{2^l Q} |f_i(y)| dy\Big)^2.
    \end{equation*}
  \end{prop}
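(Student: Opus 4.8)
The plan is to estimate the local mean oscillation $\omega_\lambda(\widetilde{S}_{\alpha,\psi}(\vec f)^2;Q)$ by bounding $\inf_c \big((\widetilde{S}_{\alpha,\psi}(\vec f)^2 - c)\chi_Q\big)^*(\lambda|Q|)$ for a suitable constant $c$. Fix a cube $Q$ with center $x_Q$ and side length $\ell(Q)$, and split each $f_i = f_i^0 + f_i^\infty$ where $f_i^0 = f_i\chi_{2\alpha\sqrt{n}\,Q}$ (a fixed dilate of $Q$ large enough to control the cone) and $f_i^\infty = f_i - f_i^0$. Since $\psi_t(\vec f)$ is multilinear in $\vec f$, expanding the product produces $2^m$ terms; one term has every $f_i^0$ and the remaining $2^m - 1$ terms contain at least one $f_i^\infty$. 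Writing $\widetilde{S}_{\alpha,\psi}(\vec f)^2 = \sum_{\vec\epsilon}\int_{\R^{n+1}_+}\Phi(\tfrac{x-y}{t\alpha})\,\psi_t(f_1^{\epsilon_1},\dots)(y)\overline{\psi_t(\dots)(y)}\,\tfrac{dydt}{t^{n+1}}$ and using $|a+b|^2 \le 2|a|^2 + 2|b|^2$ repeatedly, it suffices (up to constants depending on $m$) to bound separately the "local" piece $\widetilde{S}_{\alpha,\psi}(\vec f^{\,0})^2$ and each "global" piece.

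For the local piece, I would choose the constant $c=0$ and use that the rearrangement at level $\lambda|Q|$ is dominated (via Chebyshev) by $\lambda^{-1}|Q|^{-1}$ times the $L^1(Q)$ norm, or more efficiently use the weak $(1/m,\infty)$ bound. Precisely, $\widetilde{S}_{\alpha,\psi}(\vec f^{\,0})(x) \le S_{2\alpha,\psi}(\vec f^{\,0})(x)$ by \eqref{cutoffsquarefunction}, and by Lemma~\ref{weakestimate} applied with $F(y,t) = \psi_t(\vec f^{\,0})(y)$ together with the (unweighted) $L^1\times\dots\times L^1 \to L^{1/m,\infty}$ boundedness of $S_{1,\psi}$ recalled in the introduction, one gets
$$
\big\|S_{2\alpha,\psi}(\vec f^{\,0})\big\|_{L^{1/m,\infty}} \lesssim \alpha^{mn}\prod_{i=1}^m \|f_i^0\|_{L^1} = \alpha^{mn}\prod_{i=1}^m \int_{2\alpha\sqrt n\, Q}|f_i|.
$$
Hence $\big(\widetilde{S}_{\alpha,\psi}(\vec f^{\,0})^2\chi_Q\big)^*(\lambda|Q|) \lesssim (\lambda|Q|)^{-2m}\big(\alpha^{mn}\prod_i\int_{2\alpha\sqrt n Q}|f_i|\big)^2 \lesssim_{\lambda}\alpha^{2mn+2mn}\big(\prod_i \tfrac{1}{|2\alpha\sqrt n Q|}\int_{2\alpha\sqrt n Q}|f_i|\big)^2$, which is absorbed into the $l$-sum (the dilate $2\alpha\sqrt n\,Q$ sits inside $2^lQ$ for $l \gtrsim \log\alpha$, contributing a convergent tail; one should be slightly careful to get the clean power $\alpha^{2mn}$ rather than $\alpha^{4mn}$, which I expect requires optimizing the split radius — see below).

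For each global piece, fix two points $x, x' \in Q$; the point is to show the difference $|\widetilde{S}_{\alpha,\psi}(f_1^{\epsilon_1},\dots)(x)^2 - \widetilde{S}_{\alpha,\psi}(f_1^{\epsilon_1},\dots)(x')^2|$ is pointwise bounded by the right-hand side, which then controls $\omega_\lambda$ with the choice $c = \widetilde{S}_{\alpha,\psi}(\dots)(x')^2$ for a.e.\ fixed $x'$. Using $|A^2 - B^2| \le |A-B|(A+B)$ and Minkowski's integral inequality, this reduces to estimating $\big(\int_{\R^{n+1}_+}|\Phi(\tfrac{x-y}{t\alpha})^{1/2}\psi_t(\dots)(y) - \Phi(\tfrac{x'-y}{t\alpha})^{1/2}\psi_t(\dots)(y')|^2\tfrac{dydt}{t^{n+1}}\big)^{1/2}$-type quantities. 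There are two mechanisms: (i) the smoothness of $\Phi$ gives $|\Phi(\tfrac{x-y}{t\alpha})^{1/2} - \Phi(\tfrac{x'-y}{t\alpha})^{1/2}| \lesssim \tfrac{|x-x'|}{t\alpha} \lesssim \tfrac{\ell(Q)}{t\alpha}$, supported on $|x-y|\lesssim \alpha t$; (ii) when at least one argument of $\psi_t$ is far (since some $f_i^\infty$ lives outside $2\alpha\sqrt n\,Q$, we have $|y - y_i| \gtrsim \alpha t + \ell(Q)$-scale separation forcing either $t \gtrsim \ell(Q)$ or large $|x-y_i|/t$), the size condition \eqref{eq-sizecondition} with exponent $mn+\delta$ gives decay $t^{-\delta}$ or $2^{-l\delta}$ across the annuli $2^{l+1}Q\setminus 2^lQ$ where the far $f_i$'s are further decomposed. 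Combining the $\int dt/t$ integration (which converges because $mn+\delta$ beats $mn$, losing $\delta_0 < \min\{\delta,1/2\}$ to keep room) with the annular decomposition of $f_i^\infty$ produces exactly $\sum_l 2^{-l\delta_0}\big(\prod_i \tfrac{1}{|2^lQ|}\int_{2^lQ}|f_i|\big)^2$, and the $y$-integration over the cone $|x-y|\lesssim\alpha t$ supplies the factor $\alpha^{mn}$ per square-root, i.e.\ $\alpha^{2mn}$ overall.

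The main obstacle I anticipate is bookkeeping the aperture dependence to land on the sharp exponent $\alpha^{2mn}$ (equivalently $\alpha^{mn}$ for $\widetilde S$ itself) rather than a worse power: the cone $\Gamma_{2\alpha}$ has measure growing like $\alpha^n$ in the $y$-slice, which appears once per factor of $\psi_t$ and once per $S$, and one must ensure the truncation radius of the local/global split is chosen as a $t$-dependent (not $\alpha$-inflated) quantity — following Lerner's linear argument in \cite{Ler5} but tracking all $m$ kernel arguments — so that the "local" bound does not pick up an extra $\alpha^{mn}$ from the enlarged support $2\alpha\sqrt n\,Q$. The rest is a routine but lengthy estimation using \eqref{eq-sizecondition}, the smoothness of $\Phi$, Minkowski's inequality, and summation of the geometric tail in $l$.
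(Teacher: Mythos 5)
There is a genuine gap, and it sits exactly where you flagged it: the aperture bookkeeping for the local piece. With your split $f_i^0=f_i\chi_{2\alpha\sqrt n\,Q}$, the weak $(1/m,\infty)$ argument gives
$\big(\widetilde{S}_{\alpha,\psi}(\vec f^{\,0})^2\chi_Q\big)^*(\lambda|Q|)\lesssim_\lambda \alpha^{2mn}\,|Q|^{-2m}\big(\prod_i\int_{2\alpha\sqrt n Q}|f_i|\big)^2
=\alpha^{2mn}\,(2\alpha\sqrt n)^{2mn}\big(\prod_i\tfrac1{|2\alpha\sqrt nQ|}\int_{2\alpha\sqrt nQ}|f_i|\big)^2$,
i.e.\ $\alpha^{4mn}$ times averages, and ``absorbing the dilate into the $l$-sum for $l\gtrsim\log\alpha$'' does not remove the extra $\alpha^{2mn}$, since the right-hand side of the proposition carries no compensating decay in $\alpha$. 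You acknowledge this (``requires optimizing the split radius --- see below''), but the promised fix never materializes: the closing paragraph only restates the difficulty. The missing idea in the paper's argument is to decompose the \emph{space-time domain} first, writing $\widetilde{S}_{\alpha,\psi}(\vec f)^2=E+F$ with $E$ the integral over $T(2Q)=2Q\times(0,\ell(2Q))$ and $F$ over its complement. Then the function split inside $E$ uses the \emph{fixed}, $\alpha$-independent dilate $Q^*=8Q$: the all-local term $E(\vec f^{\,0})$ is handled by Lemma~\ref{weakestimate} and \eqref{cutoffsquarefunction}, producing exactly $\alpha^{2mn}$ (this is the only place the positive power of $\alpha$ enters); the mixed terms are estimated not pointwise but after integrating in $x$, where $\int\Phi\big(\tfrac{x-y}{\alpha t}\big)dx\le c_n(\alpha t)^n$ contributes only a harmless $\alpha^{n}$, and the kernel size condition \eqref{eq-sizecondition} with $t\le 2\ell(Q)$ and $y\in 2Q$ supplies the $2^{-l\delta}$ annular decay. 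The far region is treated by bounding $\|F(\vec f)-F(\vec f)(x_Q)\|_{L^\infty(Q)}$ (the oscillation constant is $c=F(\vec f)(x_Q)$), using $|\Phi(\tfrac{x-y}{\alpha t})-\Phi(\tfrac{x_Q-y}{\alpha t})|\lesssim\ell(Q)/(\alpha t)$ together with the support restriction $t\gtrsim 2^l\ell(Q)/\alpha$; note both of these give \emph{negative} powers of $\alpha$, which is why no further $\alpha$-inflation occurs.

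Two further points in your outline would need repair even granting a better split. First, for the global pieces you invoke $|A^2-B^2|\le|A-B|(A+B)$, but you never control $A+B$; the paper avoids this entirely by estimating the oscillation of the squared quantity $F$ directly through the difference of the $\Phi$-factors inside the integral, so no square roots are compared. Second, your reduction of the oscillation to a two-point difference for the global terms only, with $c=0$ for the local term, does not combine cleanly into a single admissible constant $c$ in the definition of $\omega_\lambda$; the paper's bookkeeping $\omega_\lambda(\widetilde S^2;Q)\le(E\chi_Q)^*(\lambda|Q|)+\|F-F(x_Q)\|_{L^\infty(Q)}$ is the precise statement you would need. In short, your proposal has the right ingredients (weak-type bound, size/smoothness of the kernel, smoothness of $\Phi$, annular sums) but lacks the $T(2Q)$ versus complement splitting with a fixed dilate, which is precisely what yields the stated power $\alpha^{2mn}$.
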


\begin{proof}[Proof of Proposition~\ref{prop:osc}]
Without the loss of generality we may assume that $\delta<1/2$.

For a cube $Q\subset \mathbb{R}^n$ we set $T(Q)=Q\times (0,\ell(Q))$. We then write
$$
\begin{aligned}
\widetilde{S}_{\alpha, \psi}(\vec{f})^2(x)&= \int_{T(2Q)}\Phi\Big(\f{x-y}{\alpha t}\Big)|\psi_t(\vec{f})(y)|^2\f{dydt}{t^{n+1}}
+\int_{\mathbb{R}^{n+1}_+\backslash T(2Q)}\Phi\Big(\f{x-y}{\alpha t}\Big)|\psi_t(\vec{f})(y)|^2\f{dydt}{t^{n+1}}\\
&=E(\vec{f})(x)+F(\vec{f})(x).
\end{aligned}
$$
We set $\vec{f^0}=(f_1\chi_{Q^*},\dotsc, f_m\chi_{Q^*})$, where $Q^*=8Q$. For each $i=1,\dots,m$, we set $f_i^0=f_i\chi_{Q^*}$ and $f_i^\vc=f_i\chi_{(Q^*)^c}$. Then we have
    \begin{equation}\label{eq1}
    E(\vec f)(z) \leq 2^m E(\vec {f}^0)(z)+2^m \sum_{\vec{\alpha}\in \mathcal{I}_0}  E\Big[(f_1^{\a_1},\ldots,f_m^{\a_m})\Big](z),
  \end{equation}
where $\mathcal{I}_0:=\{\vec{\alpha}=(\alpha_1,\dots,\alpha_m): \, \alpha_i\in\{0,\vc\}, \ \text{ and at least one $\alpha_i\neq 0$}\}$. We denote the vector $\vec{\alpha}$ by $\vec{0}$ if $\alpha_i=0$ for all $ 1\leq i\leq m$. Therefore,
$$
(E(\vec{f})\chi_Q)^*(\lambda|Q|)\leq 2^m\Big\{(E(\vec{f^0})\chi_Q)^*(\lambda|Q|/2^m)+\sum_{\vec{\alpha}\in \mathcal{I}_0}\Big[E(f_1^{\a_1},\ldots,f_m^{\a_m})\chi_Q\Big]^*(\lambda|Q|/2^m)\Big\}.
$$
Due to \eqref{cutoffsquarefunction} and Lemma \ref{weakestimate}, $\|\widetilde{S}_{\alpha, \psi}(\vec{f})\|_{L^{1/m,\vc}}\leq c_{m,n} \alpha^{mn}\|S_{1, \psi}(\vec{f})\|_{L^{1/m,\vc}}$. This together with the fact that $S_{1, \psi}$ maps continuously from $L^1\times \ldots \times L^1$ into $L^{1/m,\vc}$ yields that
$$
\begin{aligned}
(E(\vec{f^0})\chi_Q)^*(\lambda|Q|/2^m)&\leq (\widetilde{S}_{\alpha, \psi}(\vec{f^0})\chi_Q)^*(\lambda|Q|/2^m)^2\\
&\leq c_{n,m,\lambda,\psi} \alpha^{2mn} \Big(\prod_{j=1}^m \f{1}{|Q^*|}\int_{Q^*}|f_j|\Big)^2.
\end{aligned}
$$
On the other hand, for each $\vec{\a}\in \mathcal{I}_0$ we have
$$
\Big[E(f_1^{\a_1},\ldots,f_m^{\a_m})\chi_Q\Big]^*(\lambda|Q|/2^m)
\leq \f{2^m}{\lambda|Q|}\int_{\mathbb{R}^n}\int_{T(2Q)}\Phi\Big(\f{x-y}{\alpha t}\Big)\Big|\psi_t(f_1^{\a_1},\ldots ,f_m^{\a_m})(y)\Big|^2\f{dydt}{t^{n+1}}dx.
$$
This along with the fact that
$$
\int_{\mathbb{R}^n}\Phi\Big(\f{x-y}{\alpha t}\Big)dx\leq c_n(\alpha t)^n
$$
implies that
$$
\Big[E(f_1^{\a_1},\ldots,f_m^{\a_m})\chi_Q\Big]^*(\lambda|Q|/2^m)\leq c_{n}\f{2^m}{\lambda|Q|}\int_{T(2Q)}(\alpha t)^n|\psi_t(f_1^{\a_1},\ldots ,f_m^{\a_m})(y)|^2\f{dydt}{t^{n+1}}.
$$
Hence for $y\in 2Q$ and $(\alpha_1,\dots,\alpha_m)\in \mathcal{I}_0$, by \eqref{eq-sizecondition},
$$
\begin{aligned}
|\psi_t(f_1^{\a_1},\ldots ,f_m^{\a_m})(y)|
&\leq A \int_{(\mathbb{R}^n)^m}\f{t^\delta}{(t+|y-z_1|+\dotsb+|y-z_m|)^{mn+\delta}}\prod_{j=1}^m|f^{\alpha_j}_j(z_j)|d(z_j)\\
&\leq A \int_{(\mathbb{R}^n)^m}\f{t^\delta}{(|y-z_1|+\dotsb+|y-z_m|)^{mn+\delta}}\prod_{j=1}^m|f^{\alpha_j}_j(z_j)|d(z_j)\\
&\leq A (t/\ell(Q))^\delta\int_{(\mathbb{R}^n)^m}\f{\ell(Q)^\delta}{(|y-z_1|+\dotsb+|y-z_m|)^{mn+\delta}}\prod_{j=1}^m|f^{\alpha_j}_j(z_j)|d(z_j)\\
&\leq A (t/\ell(Q))^\delta\Big[\int_{(8Q)^m}\dots+ \sum_{k\geq 3}\int_{(2^{k+1}Q)^m\backslash (2^{k}Q)^m}\dots \Big]\\
&\leq c_{n} (t/\ell(Q))^\delta \sum_{k=0}^\vc \f{1}{2^{k\delta}}\Big(\prod_{j=1}^m \f{1}{|2^{k}Q|}\int_{2^{k} Q}|f_j|\Big).
\end{aligned}
$$
 These two estimates give that for $\vec{\a}\in \mathcal{I}_0$
$$
\begin{aligned}
\Big[E(f_1^{\a_1},\ldots,f_m^{\a_m})&\chi_Q\Big]^*(\lambda|Q|/2^m)\\
&\leq c_n \Big[\sum_{l=0}^\vc \f{1}{2^{l\delta}}\Big(\prod_{j=1}^m \f{1}{|2^lQ|}\int_{2^l Q}|f_j|\Big)\Big]^2\f{2^m}{\lambda|Q|}\int_{T(2Q)}(\alpha t)^n (t/\ell(Q))^{2\delta} \f{dydt}{t^{n+1}}\\
&\leq c_{n,\lambda,\psi} \alpha^n \Big[\sum_{l=0}^\vc \f{1}{2^{l\delta}}\Big(\prod_{j=1}^m \f{1}{|2^lQ|}\int_{2^l Q}|f_j|\Big)\Big]^2\\
&\leq c_{n,\lambda,\psi} \alpha^n \sum_{l=0}^\vc \f{1}{2^{l\delta}}\Big(\prod_{j=1}^m \f{1}{|2^lQ|}\int_{2^l Q}|f_j|\Big)^2\\
\end{aligned}
$$
where in the last inequality we used H\"older's inequality.

Therefore,
$$
(E(\vec{f})\chi_Q)^*(\lambda|Q|)\leq c_{n,m,\lambda,\psi} \alpha^{2mn} \sum_{l=0}^\vc \f{1}{2^{l\delta}}\Big(\prod_{j=1}^m \f{1}{|2^lQ|}\int_{2^l Q}|f_j|\Big)^2.
$$

To complete the proof, we will claim that
\begin{equation}\label{estimateonF}
|F(\vec{f})(x)-F(\vec{f})(x_Q)|\leq c_{n,\lambda,\psi} \alpha^{2mn} \sum_{l=0}^\vc \f{1}{2^{l\delta}}\Big(\prod_{j=1}^m \f{1}{|2^lQ|}\int_{2^l Q}|f_j|\Big)^2,
\end{equation}
for all $x\in Q$, where $x_Q$ is the center of $Q$.

Once we can prove \eqref{estimateonF}, the conclusion of the proposition follows immediately by using the fact that
$$
\omega_{\lambda}(\widetilde{S}_{\alpha, \psi}(\vec{f})^2;Q)\leq (E(\vec{f})\chi_Q)^*(\lambda|Q|)+ \|F(\vec{f})-F(\vec{f})(x_Q)\|_{L^\vc(Q)}.
$$

We now prove \eqref{estimateonF}. We first write
$$
|F(\vec{f})(x)-F(\vec{f})(x_Q)|\leq \sum_{l=1}^\vc \int_{T(2^{l+1}Q)\backslash T(2^l Q)}\Big|\Phi\Big(\f{x-y}{\alpha t}\Big)-\Phi\Big(\f{x_Q-y}{\alpha t}\Big)\Big| \, |\psi_t(\vec{f})(y)|^2\f{dydt}{t^{n+1}}.
$$
Note that if $t<\f{2^l -1}{4\alpha}\ell(Q)$ then $\min\{|x-y|, |x_Q-y|\}>2\alpha t$ for all $(y,t)\in T(2^{l+1}Q)\backslash T(2^l Q)$ and $x\in Q$. Hence,
$$
\Phi\Big(\f{x-y}{\alpha t}\Big)-\Phi\Big(\f{x_Q-y}{\alpha t}\Big)=0.
$$
As a consequence, we have
$$
\begin{aligned}
|F(\vec{f})(x)&-F(\vec{f})(x_Q)|\\
&\leq \sum_{l=1}^\vc \int_{T(2^{l+1}Q)\backslash T(2^l Q)}\Big|\Phi\Big(\f{x-y}{\alpha t}\Big)-\Phi\Big(\f{x_Q-y}{\alpha t}\Big)\Big|\, |\psi_t(\vec{f})(y)|^2\chi_{[\f{2^l -1}{4\alpha}\ell(Q),2^{l+1}\ell(Q))}(t)\f{dydt}{t^{n+1}}\\
&\leq \sum_{l=1}^\vc \int_{T(2^{l+1}Q)\backslash T(2^l Q)}\Big|\Phi\Big(\f{x-y}{\alpha t}\Big)-\Phi\Big(\f{x_Q-y}{\alpha t}\Big)\Big|\, |\psi_t(\vec{f})(y)|^2\chi_{[\f{2^{l-3}}{\alpha}\ell(Q),2^{l+1}\ell(Q))}(t)\f{dydt}{t^{n+1}}.
\end{aligned}
$$
It is easy to see that for $x\in Q$ we have
$$
\Big|\Phi\Big(\f{x-y}{\alpha t}\Big)-\Phi\Big(\f{x_Q-y}{\alpha t}\Big)\Big|\leq c_{n,\Phi}\f{|x-x_Q|}{\alpha t}\leq c_{n,\Phi}\f{\ell(Q)}{\alpha t}.
$$
Now we set  $\vec{f}^0=(f_1\chi_{Q_l},\dotsc, f_m\chi_{Q_l})$, where $Q_l=2^{l+2}Q$. For each $i=1,\dots,m$, we set $f_i^0=f_i\chi_{Q_l}$ and $f_i^\vc=f_i\chi_{(Q_l)^c}$. Denote
$$
F^{\vec{\alpha}}(\vec{f})= \sum_{l=1}^\vc (\ell(Q)/\alpha)\int_{2^{l+1}Q}\int_{\f{2^{l-3}}{\alpha}\ell(Q)}^{2^{l+1}\ell(Q)}|\psi_t(f_1^{\a_1},\ldots,f_m^{\a_m})(y)|^2\f{dydt}{t^{n+2}}.
$$
Therefore,
$$
\begin{aligned}
|F(\vec{f})(x)-F(\vec{f})(x_Q)|&\leq \sum_{l=1}^\vc (\ell(Q)/\alpha)\int_{2^{l+1}Q}\int_{\f{2^{l-3}}{\alpha}\ell(Q)}^{2^{l+1}\ell(Q)}|\psi_t(\vec{f})(y)|^2\f{dydt}{t^{n+2}}\\
&\leq 2^m F^{\vec{0}}(\vec{f})(x)+  2^m \sum_{\vec{\alpha}\in \mathcal{I}_0} F^{\vec{\alpha}}(\vec{f})  (x).
\end{aligned}
$$
For the first term, using \eqref{eq-sizecondition} to get that
$$
\begin{aligned}
F&^{\vec{0}}(\vec{f})(x)\\
&\leq A\sum_{l=1}^\vc (\ell(Q)/\alpha)\int_{2^{l+1}Q}\int_{\f{2^{l-3}}{\alpha}\ell(Q)}^{2^{l+1}\ell(Q)}\Big|\int_{(2^{l+2}Q)^m}\f{t^\delta}{(t+|y-z_1|+\dotsb+|y-z_m|)^{mn+\delta}}\prod_{j=1}^m|f_j(z_j)|dz_j\Big|^2\f{dydt}{t^{n+2}},
\end{aligned}
$$
which along with the fact that
$$
\begin{aligned}
\int_{2^{l+1}Q}&\Big|\f{t^\delta}{(t+|y-z_1|+\dotsb+|y-z_m|)^{mn+\delta}}\Big|^2dy\\
&=\f{1}{t^{2mn-n}}\int_{2^{l+1}Q}\f{1}{t^n}\left[\f{t}{(t+|y-z_1|+\dotsb+|y-z_m|)}\right]^{2mn+2\delta}dy\\
&\leq\f{1}{t^{2mn-n}}\int_{2^{l+1}Q}\f{1}{t^n}\left(\f{t}{t+|y-z_1|}\right)^{2mn+2\delta}dy\\
&\leq\f{1}{t^{2mn-n}}\int_{\mathbb{R}^n}\f{1}{t^n}\left(\f{t}{t+|y-z_1|}\right)^{n+\delta}dy\\
&\leq \f{c_n}{t^{2mn-n}}
\end{aligned}
$$
and Minkowski's inequality implies that
$$
\begin{aligned}
F^{\vec{0}}(\vec{f})(x)&\leq c_n \sum_{l=1}^\vc (\ell(Q)/\alpha)\Big[\int_{(2^{l+2}Q)^m}\Big(\int_{\f{2^{l-3}}{\alpha}\ell(Q)}^{2^{l+1}\ell(Q)}\f{dt}{t^{2mn+2}}\Big)^{1/2}\prod_{j=1}^m|f_j(z_j)|dz_j\Big]^{2}\\
&\leq c_n \sum_{l=1}^\vc (\ell(Q)/\alpha)\Big[\int_{(2^{l+2}Q)^m}\Big(\int_{\f{2^{l-3}}{\alpha}\ell(Q)}^{\vc}\f{dt}{t^{2mn+2}}\Big)^{1/2}\prod_{j=1}^m|f_j(z_j)|dz_j\Big]^{2}\\
&\leq c_n \sum_{l=1}^\vc (\ell(Q)/\alpha)\Big[\int_{(2^{l+2}Q)^m}\Big(\f{\alpha}{2^{l-3}\ell(Q)}\Big)^{mn+1/2}\prod_{j=1}^m|f_j(z_j)|dz_j\Big]^{2}\\
&\leq c_n \alpha^{2mn}\sum_{k=1}^\vc \f{1}{2^{k}}\Big(\prod_{j=1}^m \f{1}{|2^kQ|}\int_{2^k Q}|f_j|\Big)^2\\
&\leq c_n \alpha^{2mn}\sum_{k=1}^\vc \f{1}{2^{k\delta}}\Big(\prod_{j=1}^m \f{1}{|2^kQ|}\int_{2^k Q}|f_j|\Big)^2.
\end{aligned}
$$
For the second term $\sum_{\vec{\alpha}\in \mathcal{I}_0} F^{\vec{\alpha}}(\vec{f})(x)$, similar to previous computation, using \eqref{eq-sizecondition} we get that, for $(\alpha_1,\dots,\alpha_m)\in \mathcal{I}_0$ and $(y,t)\in T(2^{l+1}Q)$ ,
$$
\begin{aligned}
|\psi_t(f_1^{\a_1},\ldots ,f_m^{\a_m})(y)|
&\leq A \int_{(\mathbb{R}^n)^m}\f{t^\delta}{(t+|y-z_1|+\dotsb+|y-z_m|)^{mn+\delta}}\prod_{j=1}^m|f^{\alpha_j}_j(z_j)|d(z_j)\\
&\leq A \Big[\int_{(2^{l+1}Q)^m}\dots+\sum_{k\geq 1}\int_{(2^{l+k+1}Q)^m\backslash (2^{l+k}Q)^m}\dots\Big]\\
&\leq c_{n,\psi} (t/\ell(Q))^\delta \sum_{k=0}^\vc \f{1}{2^{(k+l)\delta}}\Big(\prod_{j=1}^m \f{1}{|2^{k+l}Q|}\int_{2^{k+l} Q}|f_j|\Big)\\
&\leq c_{n,\psi} (t/\ell(Q))^\delta \sum_{k=0}^\vc \f{1}{2^{k\delta}}\Big(\prod_{j=1}^m \f{1}{|2^{k}Q|}\int_{2^{k} Q}|f_j|\Big).
\end{aligned}
$$
{Plugging} this estimate into the expression of $F^{\vec{\alpha}}(\vec{f})(x)$ and by a straightforward calculation we obtain
$$
\begin{aligned}
\sum_{\vec{\alpha}\in \mathcal{I}_0} F^{\vec{\alpha}}(\vec{f})(x) &\leq c_{n,\psi}\alpha^{n-2\delta}\sum_{l=1}^\vc \f{2^{2l\delta}}{2^{l}} \Big(\sum_{k=l}^\vc \f{1}{2^{k\delta}}\prod_{j=1}^m \f{1}{|2^{k}Q|}\int_{2^{k} Q}|f_j|\Big)^2\\
&\leq c_{n,\psi}\alpha^{n-2\delta} \sum_{k=1}^\vc \f{1}{2^{k\delta}}\Big(\prod_{j=1}^m \f{1}{|2^{k}Q|}\int_{2^k Q}|f_j|\Big)^2
\end{aligned}
$$
provided $\delta<1/2$.

This completes our proof.
\end{proof}

The conclusion in Theorem \ref{thm:sharp} follows immediately from Theorem \ref{Thm:LMS_sparse_Ap_thm} and the following result.
\begin{prop}\label{prop3.3}
Let $w$ be a weight, $0<p<\vc$ and $\alpha \geq 1$. Then for any appropriate $\vec{f}$, we have
$$
\|S_{\alpha,\psi}(\vec{f})\|_{L^p(w)}\leq c(m,n,\psi)\alpha^{mn}\sup_{\mathscr{D},\mathcal{S}}\|\mathcal{A}^2_{\mathscr{D},\mathcal{S}}(|\vec{f}|)\|_{L^p(w)}.
$$
\end{prop}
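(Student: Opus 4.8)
The plan is to follow, and adapt to the multilinear setting, the approach of \cite{Ler5}. The first reduction, via \eqref{cutoffsquarefunction}, is to replace $S_{\alpha,\psi}$ by the smooth truncation $\widetilde S_{\alpha,\psi}$ and to estimate $\|\widetilde S_{\alpha,\psi}(\vec f)\|_{L^p(w)}=\|\widetilde S_{\alpha,\psi}(\vec f)^2\|_{L^{p/2}(w)}^{1/2}$. I would then apply Theorem~\ref{decom1} to $g=\widetilde S_{\alpha,\psi}(\vec f)^2$ over an increasing sequence of cubes $Q_0\uparrow\Rn$; here one uses that for appropriate $\vec f$ one has $\widetilde S_{\alpha,\psi}(\vec f)\le S_{2\alpha,\psi}(\vec f)\in L^{1/m,\vc}$ by \eqref{cutoffsquarefunction}, Lemma~\ref{weakestimate} and the weak-type bound for $S_{1,\psi}$, so that $m_g(Q_0)\to0$. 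In the limit one obtains a dyadic grid $\mathscr D$ and a sparse family $\mathcal S\subset\mathscr D$ with
$$\widetilde S_{\alpha,\psi}(\vec f)^2(x)\le 2\sum_{Q\in\mathcal S}\omega_{1/2^{n+2}}\big(\widetilde S_{\alpha,\psi}(\vec f)^2;Q\big)\chi_Q(x)\quad\text{for a.e. }x.$$
Feeding in Proposition~\ref{prop:osc} (with $\lambda=1/2^{n+2}$ and a fixed $\delta_0<\min\{\delta,\tfrac12\}$) turns this into the pointwise bound
$$\widetilde S_{\alpha,\psi}(\vec f)^2(x)\le c_{m,n,\psi}\,\alpha^{2mn}\sum_{Q\in\mathcal S}\sum_{l\ge0}\frac{1}{2^{l\delta_0}}\Big(\prod_{i=1}^m(|f_i|)_{2^lQ}\Big)^2\chi_Q(x),$$
where $(|f_i|)_{P}=\frac{1}{|P|}\int_P|f_i|$.

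The remaining, and main, task is to dominate the double sum on the right by a fixed finite number of genuine sparse operators $\mathcal A^2$, with constants independent of $l$. Here I would argue as in \cite{Ler5}: using finitely many adjacent dyadic grids $\mathscr D_1,\dots,\mathscr D_{3^n}$, for each $Q\in\mathcal S$ and $l$ one picks $\widehat Q_l\in\bigcup_j\mathscr D_j$ with $2^lQ\subseteq\widehat Q_l$ and $\ell(\widehat Q_l)\le 3\cdot2^l\ell(Q)$, so that $(|f_i|)_{2^lQ}\le c_n(|f_i|)_{\widehat Q_l}$; one then regroups the sum not by $Q$ but by the \emph{side length of} $\widehat Q_l$ (equivalently, by the pair consisting of the grid index $j$ and the side length), and exploits two elementary facts: (i) the cubes $Q\in\mathcal S$ sent, for a fixed $l$, to a common $\widehat Q_l$ and containing a common point are mutually disjoint up to bounded multiplicity, since cubes of a fixed side length in a dyadic grid are disjoint; and (ii) along the nested chain of cubes of $\mathcal S$ through a fixed point the side lengths decrease geometrically, so the geometric weights $2^{-l\delta_0}$ telescope across scales. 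Carrying this out produces, for each $j$, a sparse family $\mathcal S_j\subset\mathscr D_j$ with
$$\sum_{Q\in\mathcal S}\sum_{l\ge0}\frac{1}{2^{l\delta_0}}\Big(\prod_{i=1}^m(|f_i|)_{2^lQ}\Big)^2\chi_Q\ \le\ c_{n,\delta_0}\sum_{j=1}^{3^n}\mathcal A^2_{\mathscr D_j,\mathcal S_j}(|\vec f|)^2 .$$
Taking $L^{p/2}(w)$‑(quasi)norms of these finitely many terms and then square roots yields $\|\widetilde S_{\alpha,\psi}(\vec f)\|_{L^p(w)}\le c_{m,n,\psi}\,\alpha^{mn}\sup_{\mathscr D',\mathcal S'}\|\mathcal A^2_{\mathscr D',\mathcal S'}(|\vec f|)\|_{L^p(w)}$ (the exponent $\alpha^{mn}$ coming from $(\alpha^{2mn})^{1/2}$), and hence the proposition, since $S_{\alpha,\psi}(\vec f)\le\widetilde S_{\alpha,\psi}(\vec f)$.

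I expect the last step — the $l$‑uniform reorganization into sparse operators — to be the only delicate point. The naive choice of $\widehat Q_l$ attaches, for a single cube $\widehat Q_l$, up to $2^{ln}$ preimages $Q\in\mathcal S$, which would cost a factor growing like $2^{ln}$ per scale and overwhelm the decay $2^{-l\delta_0}$ with $\delta_0<\tfrac12$; the whole point is that, once one also sums over $l$ and organizes the geometric weights by the side length of $\widehat Q_l$ rather than of $Q$, these contributions telescope and the output families $\mathcal S_j$ are genuinely sparse with an absolute constant. Getting this bookkeeping right, precisely as in \cite{Ler5}, is where the work lies; the truncation, the local mean oscillation step, and the passage from the $\mathcal A^2$ domination to the $L^p(w)$ estimate are routine.
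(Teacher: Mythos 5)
Your skeleton — the truncation \eqref{cutoffsquarefunction}, Theorem \ref{decom1} applied to $\widetilde S_{\alpha,\psi}(\vec f)^2$ on cubes $Q_0\uparrow\Rn$ with the weak $L^{1/m,\infty}$ bound killing the medians, then Proposition \ref{prop:osc} — is exactly the paper's. The gap is in the step you yourself single out as the delicate one: dominating $\sum_{Q\in\mathcal S}\sum_{l\ge0}2^{-l\delta_0}\bigl(\prod_i(|f_i|)_{2^lQ}\bigr)^2\chi_Q$ by finitely many genuine sparse operators. Replacing $2^lQ$ by a comparable cube $\widehat Q_l$ from one of $3^n$ adjacent grids, noting bounded multiplicity at each fixed side length, and telescoping the weights $2^{-l\delta_0}$ does not show that the resulting families $\mathcal S_j$ are sparse: sparseness is a Carleson packing condition on the family, not a pointwise multiplicity-per-scale condition (the collection of \emph{all} dyadic subcubes of $[0,1)^n$ has multiplicity one at every scale and is not a finite union of sparse families). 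Concretely, let $\mathcal S$ consist of pairwise disjoint cubes $Q_P$, one inside each dyadic $P\subset[0,1)^n$, with $\ell(Q_P)=2^{-k^2}$ when $\ell(P)=2^{-k}$; this $\mathcal S$ is sparse (even disjoint), yet taking $l$ with $2^l\ell(Q_P)\sim\ell(P)$ one sees that the family of enlargements $\{\widehat Q_l(Q)\}$ contains, for every dyadic $P\subset[0,1)^n$, a cube of side comparable to $\ell(P)$ meeting $P$; these all sit inside a fixed bounded cube and their total measure diverges, so the output family is not a bounded union of sparse families. Thus your facts (i) and (ii) do not yield the displayed domination with sparse $\mathcal S_j$; the inequality you want is true, but it cannot be obtained by simply dilating the cubes of $\mathcal S$.

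This is precisely where the paper imports a nontrivial external result: after writing $\mathcal T^2_{\mathcal S,l}(\vec f)=[\mathcal T^1_{\mathcal S,l}(\vec f,\vec f)]^{1/2}$, it invokes the argument of Sections 11--13 of \cite{LN} (cf.\ \cite{CR}) to get the pointwise bound $\sum_l 2^{-l\delta}\mathcal T^1_{\mathcal S,l}(\vec f,\vec f)\le c\,\sup_{\mathscr D,\mathcal S'}\mathcal A^1_{\mathscr D,\mathcal S'}(|\vec f|,|\vec f|)=c\,\sup_{\mathscr D,\mathcal S'}[\mathcal A^2_{\mathscr D,\mathcal S'}(|\vec f|)]^2$. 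The substance of that result is a stopping-time construction adapted to $\vec f$ itself (the sparse family is built from level sets of the shifted operator, not from the dilated cubes), with the complexity $l$ of the shift entering only linearly, which the factor $2^{-l\delta_0}$ then absorbs. Also, the attribution to \cite{Ler5} is off: there the shifted dyadic operators are handled through weighted norm estimates with controlled growth in the shift parameter rather than through a pointwise re-sparsification; that norm-based route can be made to work but proves a different (norm, not pointwise) statement, and it is exactly the route that imposed the restriction $p\ge2$ in an earlier version of this paper, removed via \cite{CR,LN} as the acknowledgements indicate. To complete your proof, either quote the shift-domination lemma from \cite{LN} or \cite{CR}, or reproduce its stopping-time argument in place of the dilation bookkeeping.
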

\begin{proof}
From Theorem \ref{decom1} and Proposition \ref{prop:osc}, for $Q_0\in \mathcal{D}$, we can pick a sparse family in $Q_0$ which is denoted by $\mathcal{S}=\mathcal{S}(Q_0):=\{Q\}\in \mathcal{D}$ so that
$$
\begin{aligned}
|S_{\alpha,\psi}(\vec{f})(x)^2&-m_{S_{\alpha,\psi}(\vec{f})^2}(Q_0)|\\
&\leq c_{n,m,\psi}\alpha^{2mn}\Big\{\sum_{Q\in \mathcal{S}(Q_0)}\sum_{l=0}^\vc 2^{-l\delta}\Big(\prod_{i=1}^m\f{1}{|2^l Q|}\int_{2^l Q}|f_i(y)|dy\Big)^2\chi_{Q}(x)\Big\}\\
&\leq c_{n,m,\psi}\alpha^{2mn}\Big\{ \sum_{l=0}^\vc 2^{-l\delta} (\mathcal{T}^2_{\mathcal{S}(Q_0),l}(\vec{f})(x))^2\Big\},
\end{aligned}
$$
for a.e. $x\in Q_0$, where
$$
\mathcal{T}^\gamma_{\mathcal{S},l}(\vec{f})(x)=\Big[\sum_{Q\in \mathcal{S}}\Big(\prod_{i=1}^m\f{1}{|2^l Q|}\int_{2^l Q}|f_i(y)|dy\Big)^\gamma\chi_{Q}(x)\Big]^{1/\gamma},
$$
for $\gamma\geq 1$  and sparse family $\mathcal{S}$ in $\mathcal{D}$.

We now observe that
$$
\mathcal{T}^2_{\mathcal{S}(Q_0),\ell}\vec{f} = \Big[\mathcal{T}^1_{\mathcal{S}(Q_0),\ell}(\vec{f},\vec{f})\Big]^{1/2}.
$$
On the other hand, the argument in Sections 11-13 in \cite{LN} shows that
$$
\begin{aligned}
\sum_{l=0}^\vc 2^{-l\delta} \mathcal{T}^1_{\mathcal{S}(Q_0),l}(\vec{f},\vec{f})(x)&\leq c_{m,n,\delta} \sup_{\mathscr{D},\mathcal{S}}\mathcal{A}^1_{\mathscr{D},\mathcal{S}}(|\vec{f}|, |\vec{f}|)(x)\\
&\leq c_{m,n,\delta}\sup_{\mathscr{D},\mathcal{S}}[\mathcal{A}^2_{\mathscr{D},\mathcal{S}}(|\vec{f}|)(x)]^2.
\end{aligned}
$$

Hence, we obtain that
\begin{equation}\label{eq5-thm1.1}
\begin{aligned}
|S_{\alpha,\psi}(\vec{f})(x)^2-m_{S_{\alpha,\psi}(\vec{f})^2}(Q_0)|\leq c_{n,m,\psi}\alpha^{2mn} \sup_{\mathscr{D},\mathcal{S}}[\mathcal{A}^2_{\mathscr{D},\mathcal{S}}(|\vec{f}|)(x)]^2,
\end{aligned}
\end{equation}
for a.e. $x\in Q_0$.

Since $S_{\alpha,\psi}$ maps $L^{1}\times\dotsb\times L^{1}$ into $L^{1/m,\vc}$, $\lim_{|Q_0|\to \vc}m_{S_{\alpha,\psi}(\vec{f})^2}(Q_0)=0$ provided $\vec{f}\in L^{1}\times \ldots\times L^1$. This together with \eqref{eq5-thm1.1} implies that
\begin{equation}\label{eq6-thm1.1}
S_{\alpha,\psi}(\vec{f})(x)^2\leq c_{n,m,\psi}\alpha^{2mn} \sup_{\mathscr{D},\mathcal{S}}[\mathcal{A}^2_{\mathscr{D},\mathcal{S}}(|\vec{f}|)(x)]^2.
\end{equation}
Hence,
$$
\|S_{\alpha,\psi}(\vec{f})\|_{L^p(w)}\leq c_{m,n,\psi}\alpha^{mn}\sup_{\mathcal{S}\in \mathcal{D}}\|\mathcal{A}^2_{\mathscr{D},\mathcal{S}}(|\vec{f}|)\|_{L^p(w)}.
$$
This completes our proof.
\end{proof}

\bigskip

\begin{proof}[Proof of Theorem \ref{thm:sharp2}:]
We first observe that
$$
g^*_{\lambda,\psi}(\vec{f})(x)^2\leq \sum_{k=1}^\vc 2^{-kn\lambda}S_{2^k,\psi}(\vec{f})(x)^2,
$$
which together with \eqref{eq6-thm1.1} implies that
$$
\begin{aligned}
g^*_{\lambda,\psi}(\vec{f})(x)^2&\leq c_{n,m,\psi}\sum_{k=1}^\vc 2^{-kn\lambda}2^{2kmn}\Big\{ \sup_{\mathscr{D},\mathcal{S}}[\mathcal{A}^2_{\mathscr{D},\mathcal{S}}(|\vec{f}|)(x)]^2\Big\}\\
&\leq c_{n,m,\psi}\Big\{ \sup_{\mathscr{D},\mathcal{S}}[\mathcal{A}^2_{\mathscr{D},\mathcal{S}}(|\vec{f}|)(x)]^2\Big\},
\end{aligned}
$$
provided $\lambda>2m$.

This implies that for $\vec{w}\in A_{\vec{P}}$ and $p>0$ we have
$$
\|g^*_{\lambda,\psi}(\vec{f})\|_{L^p(\nw)}\leq c_{n,m,\psi} \sup_{\mathcal{S}\in \mathcal{D}}\|\mathcal{A}^2_{\mathscr{D},\mathcal{S}}(|\vec{f}|)\|_{L^p(w)}.
$$
The conclusion in Theorem \ref{thm:sharp2} follows immediately from Theorem \ref{Thm:LMS_sparse_Ap_thm}.
\end{proof}

\end{document}